\numberwithin{equation}{section}
\newtheorem{prop}{Proposition}[section]
\newtheorem{lem}[prop]{Lemma}
\newtheorem{theorem}[prop]{Theorem}
\newtheorem{cor}[prop]{Corollary}
\theoremstyle{definition}
\newtheorem{dfn}[prop]{Definition}
\theoremstyle{remark}
\newtheorem{rem}[prop]{Remark}
\newcommand{\SF}{{\rm sf}}
\def\dim{\mathop{\rm dim}}
\def\Id{\mathop{\rm Id}}
\def\End{\mathop{\rm End}}
\def\Coker{\mathop{\rm Coker}}
\def\Hom{\mathop{\rm Hom}}\def\Ch{\mathop{\rm Ch}}\def\CS{{\rm CS}}
\def\ch{\mathop{\rm ch}}
\def\Ch{\mathop{\rm Ch}}
\def\id{\mathop{\rm id}}
\newcommand{\cK}{{\mathcal K}}
\newcommand{\Sp}{\mathrm{Sp}}
\newcommand{\cliff}{\mathrm{cliff}}
\def\phi{\varphi}
\def\ep{\varepsilon}
\def\a{\alpha}
\def\O{\Omega}
\def\s{\sigma}\def\t{\tau}
\def\G{\Gamma}
\def\R{\mathbin{\mathbb R}}\def\Z{\mathbin{\mathbb Z}}
\def\Q{\mathbin{\mathbb Q}}
\def\C{\mathbb{C}}
\def\cU{{\mathcal U}}
\def\cF{\mathcal F}\def\cN{{\mathcal N}}
\def\cV{\mathcal V}
\def\cE{\mathcal E}
\def\Mt{\tilde{M}}
\def\Tor{\mathop{\rm Tor}}
\def\ind{\mathop{\rm ind}}
\def\dim{\mathop{\rm dim}}\def\Ind{\mathop{\rm Ind}\,}
\newcommand\Di{D\kern-7pt/}
\let\oldmarginpar\marginpar
\renewcommand\marginpar[1]{\-\oldmarginpar[\raggedleft\footnotesize #1]%
{\raggedright\footnotesize #1}}
\begin{document}

\title{Flat bundles, von Neumann algebras and $K$-theory with $\R/\Z$-coefficients}

\author{Paolo Antonini, Sara Azzali and Georges Skandalis}

\thanks{This research took place at the Institut de Math\'ematiques de Jussieu, CNRS - University Paris Diderot.\\ Paolo Antonini is funded by the \emph{Projet ANR KInd}.\\Sara Azzali is funded by an \emph{INdAM-Cofund Fellowship}. \\
We thank these institutions for their support.}
\maketitle

{\footnotesize
%
\vskip 2pt   Institut de Math{\'e}matiques de Jussieu, (UMR 7586), Universit\'e Paris Diderot (Paris 7)

\vskip -1pt  UFR de Math\'ematiques, {\sc CP} {\bf 7012} - B\^atiment Sophie Germain 

\vskip-1pt  5 rue Thomas Mann, 75205 Paris CEDEX 13, France

\vskip-1pt  paolo.anton@gmail.com, azzali@math.jussieu.fr, skandalis@math.univ-paris-diderot.fr
}

\begin{abstract}
Let $M$ be a closed manifold and $\alpha : \pi_1(M)\to U_n$ a representation. We give a purely $K$-theoretic description of the associated  element $[\alpha]$ in the $K$-theory of $M$ with $\R/\Z$-coefficients. To that end, it is convenient to describe the $\R/\Z$-$K$-theory as a relative $K$-theory with respect to the inclusion of $\C$ in a finite von Neumann algebra $B$. We use the following fact: there is, associated with $\alpha$, a finite von Neumann algebra $B$ together with a flat bundle $\cE\to M$ with fibers $B$, such that $E_\a\otimes \cE$ is canonically isomorphic with $\C^n\otimes \cE$, where $E_\alpha$ denotes the flat bundle with fiber $\C^n$ associated with $\alpha$. 
We also discuss the spectral flow and rho type description of the pairing of the class 
$[\alpha]$ with the $K$-homology class of an elliptic selfadjoint (pseudo)-differential operator $D$ of order $1$.
\end{abstract}


\tableofcontents

\section{Introduction}

Secondary invariants of geometric elliptic operators, such as the rho invariant of a unitary representation $\alpha:\G\to U_n$ of the fundamental group, 
 gain stability properties only when reduced modulo $\mathbb{Z}$. Indeed, Atiyah, Patodi and Singer in the seminal papers \cite{APS2, APS3}  proved that the modulo $\mathbb Z$ class of the reduced rho invariant of an elliptic selfadjoint operator $D$ can be described as the pairing of the $K$-homology class $[D]$ 
 with a $K$-theory class $[\alpha]$ with  $\R/\Z$-coefficients associated with $\alpha$. This result is called the \emph{index theorem for flat bundles}. The construction of  $[\alpha]\in K^1(M, \R/\Z)$ is using cohomology, and is based on the fact that $K$-theory with real coefficients is isomorphic to $H^*(M;\R)$. 

The model of $K^*(X, \R/\Z)$ in \cite{APS3} is built of two addenda: a torsion part $K^*(X, \Q/\Z)=\varinjlim K^*(X, \Z/n!\Z)$, and a free part which is the image of $K^*(X, \R):=K^*(X)\otimes \R$; it relies on the functorial properties of ordinary $K$-theory. 

Atiyah, Patodi and Singer then suggested that a direct description can be given in terms of von Neumann algebras. This idea has been an inspiration for many authors. See in particular  \cite{DHK, DHK2, Hu, KP, Ba}.

Beyond the Atiyah--Patodi--Singer (APS) one, a number of models of the $\R/\Z$-K-theory can be found in the literature, each one has its own features and flavor. Karoubi's and Lott's models are based on Chern--Weil and Chern--Simons theory \cite{Ka, Lo}. Basu implemented the APS suggestion building a model with bundles of modules over von Neumann algebras \cite{Ba}. Apart from the model of Karoubi, the constructions in \cite{Lo,Ba} are based in a more or less explicit way on the notion of connection.

\medskip The purpose of this paper is to give a canonical construction of the $\R/\Z$-$K$-theory class associated with a flat bundle using operator algebraic tools, and to compute the pairing with $K$-homology as a Kasparov product. In particular, the models of $K$-theory with coefficients used here are purely operator theoretic.

Note that for a $C^*$-algebra $A$ in the bootstrap category, one can define the $K$-theory of $A$ with $\mathbb R$-coefficients as $K_*(A; \R):=K_*(A\otimes B)$, where $B$ is any  ${\rm II}_1$-factor: in fact, by the K\"unneth property, the  group $K_*(A\otimes B)$ is independent of $B$ up to a canonical isomorphism - and coincides with the APS model for $A$ commutative. 

The model for the $K$-theory with $\R/\Z$-coefficients that we use is simply the ordinary relative $K$-theory of the inclusion $A\hookrightarrow A\otimes B$ or, equivalently, the group $K_{*-1}(A\otimes C_{_{i_0}})$ where $C_{_{i_0}}$ is the mapping cone of the unital inclusion $i_0:\C\hookrightarrow B$ for any $\rm{II}_1$-factor $B$. This is immediately shown to be well defined for any $C^*$-algebra in the bootstrap category. Let us mention that  this mapping cone has been used recently by Deeley to construct a model of $K$-homology with $\R/\Z$-coefficients \cite{De}.

To show that all the models are equivalent under canonical isomorphisms, we use also operator algebraic definitions of the $K$-theory with coefficients in $\mathbb{Q},\  \mathbb{Z}/n \mathbb{Z}$ and $\Q/\Z$. Our models are direct noncommutative generalizations of the APS ones: we have the inclusion of $\C$ in $n\times n$-matrices - and inductive limits (UHF algebras), whereas the APS definition employs the $K$-theory of the mapping cones of maps of degree $n$ on the circle - and projective limits.

\smallskip Let $M$ be a closed manifold with fundamental group $\G$. If $E_\a$ is a flat vector bundle over $M$ with holonomy $\a\colon \G\to U(n)$, Atiyah, Patodi and Singer's class $[\a]_{APS}\in K^1(M;\R/\Z)$ is based on the idea that $E_\a$ defines a torsion element in the reduced $K$-theory of $M$, in fact there exists $k\in \mathbb N^*$ such that the sum of $k$ copies of $E_\alpha $ is trivial. The class of $E_\a$ in $K$-theory with $\Z/k\Z$-coefficients is then added to the transgression form corresponding to the two flat connections on the trivial bundle with fiber $\C^{nk}$ thus obtained.

\smallskip
Our main result is the direct description of the element $[\alpha]\in K^1(M;\R/\Z)$ associated with $\alpha$: this is given by the pair of bundles $E_\a,\C^n$ with the (almost) canonical isomorphism after tensoring with a ${\rm II}_1$-factor $B$. Our construction consists of the following points:
\begin{enumerate}
\item there is a canonical flat bundle $\mathcal{E}$ with fiber a $\rm{II}_1$-factor $B$ associated with a morphism $u:\G \to U(B)$ (where $U$ is the unitary group of $B$); 
\item there is a canonical isomorphism $E_\a\otimes \mathcal{E}\longrightarrow  \mathbb{C}^n\otimes \mathcal{E}$;
\item the bundle $\cE$ is trivial in $K$-theory and may actually be assumed to be trivial.
\end{enumerate}

The $\rm II_1$-factor $B$ can be taken to be $L^\infty(U_n)\rtimes \G$ - \emph{i.e.} the ${\rm II}_1$ von Neumann algebra Morita equivalent to the foliated flat bundle $\tilde M\times_\a U_n$ - and the bundle $\cE$ encodes the bundle of frames for $E_\alpha$.

The canonical isomorphism in (2) derives from the fact that a vector bundle becomes trivial when it is lifted to the bundle of frames.

The point (3) is based on the fact that a flat bundle with fibers a $\rm{II}_1$-factor is trivial in $K$-theory. This is easily seen applying Atiyah's  $L^2$-index theorem for covering spaces \cite{Ati} (and its generalizations \cite{Lu,Sc} by L\"uck and Schick to every trace on $C^*(\G)$  (\footnote{As a side remark, we note that this result remains true for any trace on $\ell^1(\G)$ by showing that Atiyah's method extends.})), together with the property that $K$-homology separates points of $K^*(M;\R)$. Conversely, Chern--Weil theory can be used to prove the same result, and this in turn gives a different proof of the $L^2$-index theorem and its generalizations.

\medskip We then establish the independence from all choices involved in the construction of $[\a]$, and further show - using a Chern--Simons transgression argument with coefficients in von Neumann algebras - that our element is the same as the one constructed by Atiyah, Patodi and Singer.

We finally pair the $K$-theory class $[\a]$ with a $K$-homology class $[D]\in K_*(M)$  represented by a first order elliptic operator. The pairing is realized as a Kasparov product
\begin{equation*}
  KK^0(\C, C(M)\otimes C_{i_0})\times KK^1(C(M),\C)\longrightarrow KK^1(\C, C_{i_0})=K_1(C_{i_0})=  \R/\Z.
\end{equation*}

To compute the intersection product $[\alpha]\otimes [D]$, it is convenient to look at $[\alpha]$ in $KK(\mathbb{C},Z_{i_0}\otimes C(M))$, the $K$-theory of the double cylinder of the inclusion $C(M)\hookrightarrow C(M)\otimes B$. This provides a path which interpolates between the operator $D^\alpha $ obtained as a twisted tensor product of $D$ by the flat bundle $E_\a$, and the operator $D^n$ obtained by tensoring $D$ by the trivial bundle 
of rank $n=\rm{rk}(E_\a)$. The $KK$-product puts $D^\alpha $ and $D^n$ as affiliated to the same  ${\rm II}_\infty$ factor with  (relatively) compact resolvent, that are bounded perturbations of each other since they have the same principal symbol. Furthermore, they have discrete spectrum and the corresponding spectral projections have integer von Neumann dimension since they are obtained by tensoring with $B$ the Dirac operators $D^\alpha$ and $D^n$.
 
The pairing, as an element of $\R/\Z$, is given by a type ${\rm II}$ spectral flow: for every $a \in \R$, there is a well defined index between the two projections $\chi_{[a,\infty[}(D^\alpha)$ and $\chi_{[a,\infty[}(D^n)$ whose difference is relatively compact; this index does not depend on $a$ up to $\Z$.
 
Finally, since the spectral flow of Dirac type operators is related through variational formulas to the eta invariant (see \cite{BF, BGV, MZ2, CP1}), we get the index theorem for flat bundles
$[\alpha]\otimes [D]=\xi(D^\alpha)-\xi(D^n)\in \mathbb{R}/\mathbb{Z}.$


\section{Preliminary constructions}
We start with some well known constructions for $C^{*}$-algebras that we shall use in the following.

\subsection{Full modules}

Let $A$ be a unital $C^*$-algebra (or just a ring). 
 A (right) $A$-module $F$ is said to be \emph{full} if $ \text{span} \{\ell(x),\ x\in F,\ \ell\in \mathcal {L}(F, A)\}=A$ (where $\mathcal L(F, A)\}$ denotes the space of $A$-linear maps from $F$ to $A$). If $A$ is a non unital $C^*$-algebra, $F$ is full if $\text{span} \{\ell(x),\ x\in F,\ \ell\in \mathcal L(F, A)\}$ is dense in $A$.

\begin{lem}\label{full1}
Let $A$ be unital and $F$ be a finitely generated projective full module over $A$. Let $E, E'$ be finitely generated projective modules over $A$.  Then $[E]=[E']$ in $K_0(A)$ if and only if $\exists k\in \mathbb{N} : E\oplus F^k\simeq E'\oplus F^k$.
\end{lem}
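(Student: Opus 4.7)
My plan is to treat the two implications separately; only the forward direction has substance, and its crux is using fullness to pull off a trivial rank-one summand from a power of $F$.

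The ``if'' direction is immediate: an isomorphism $E \oplus F^k \simeq E' \oplus F^k$ of f.g.\ projective $A$-modules gives $[E] + k[F] = [E'] + k[F]$ in $K_0(A)$, whence $[E]=[E']$.

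For the ``only if'' direction I would first reduce to stabilization by a free module. By the definition of $K_0(A)$ as the Grothendieck group of isomorphism classes of f.g.\ projective $A$-modules, $[E]=[E']$ means there is a f.g.\ projective $H$ with $E \oplus H \simeq E' \oplus H$. Choosing $H'$ with $H \oplus H' \simeq A^n$ yields
$$E \oplus A^n \simeq E' \oplus A^n.$$
So it suffices to show that for any $n\in\mathbb{N}$ there is some $k$ with $A^n$ a direct summand of $F^k$.

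The key step, and the only place where fullness really enters, is to produce a surjection $F^m \twoheadrightarrow A$ for some $m$. Since $F$ is full and $A$ unital, we can write
$$1_A = \sum_{i=1}^m \ell_i(x_i)$$
for finitely many $x_1,\dots,x_m \in F$ and $\ell_1,\dots,\ell_m \in \mathcal{L}(F,A)$. The $A$-linear map $\Phi := (\ell_1,\dots,\ell_m) : F^m \to A$ then hits $1_A$, hence is surjective. Since $A$ is projective as a right module over itself, $\Phi$ admits a section, giving $F^m \simeq A \oplus G$ with $G$ f.g.\ projective.

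Taking $n$ copies and setting $k := mn$, we get $F^{mn} \simeq (F^m)^n \simeq A^n \oplus G^n$, and therefore
$$E \oplus F^{mn} \simeq E \oplus A^n \oplus G^n \simeq E' \oplus A^n \oplus G^n \simeq E' \oplus F^{mn},$$
which is the desired isomorphism. The only nontrivial ingredient is the extraction of the trivial summand from $F^m$, which relies essentially on fullness; once this is in hand, the rest is a matter of adding direct summands symmetrically on both sides. I do not anticipate any real obstacle beyond making sure the argument is stated correctly in the module-theoretic (rather than projection-theoretic) language, which matches the framing of the lemma.
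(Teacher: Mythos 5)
Your proof is correct and follows essentially the same route as the paper: use fullness to write $1=\sum\ell_i(x_i)$, obtain a surjection $F^m\twoheadrightarrow A$ that splits by projectivity of $A$, and use powers of this to absorb the stabilizing module on both sides. The only cosmetic difference is that you first replace the projective stabilizer $G$ by a free one $A^n$ and then exhibit $A^n$ as a summand of $F^{mn}$, whereas the paper composes the surjection $F^{nm}\twoheadrightarrow A^m\twoheadrightarrow G$ directly and splits it using projectivity of $G$; the content is identical.
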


\begin{proof}  
By hypothesis there exists a finitely generated projective $G$ such that $E\oplus G\simeq E'\oplus G$. Write $1=\sum \ell_i(x_i)$ with $\ell_1,\dots \ell_n\in F^*$ and $x_i\in F$; thus we construct the module map $f\colon G^n\to A\;\;, (y_1,\ldots ,y_n)\mapsto \sum _i\ell_i (y_i)$ which is onto. Since $G$ is finitely generated, there is a onto module map $g:A^m\to G$;  we deduce a surjective module map $h\colon F^{nm}\to G$.

Then it also holds $F^{nm}\simeq G\oplus G'$ (projectivity of $G$), and then $E\oplus F^{nm}\simeq E'\oplus F^{nm}$.

The converse is obvious.
\end{proof}

\begin{lem}\label{full2}
Let $A$ be a unital $C^*$-algebra, and $E_1$ and $E_2$ be finitely generated projective and full $A$-modules.
Suppose $[E_1]=[E_2]$ in $K_0(A)$. Then there exists $n$ such that $E_1^n\simeq E_2^n$.
 \end{lem}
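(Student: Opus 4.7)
The plan is to apply Lemma \ref{full1} in a symmetric way, using the fullness of \emph{both} $E_1$ and $E_2$. Since $E_1$ is full and $[E_1]=[E_2]$, Lemma \ref{full1} provides $k_1\in\mathbb N$ with $E_1\oplus E_1^{k_1}\simeq E_2\oplus E_1^{k_1}$, that is
\[
E_1^{k_1+1}\simeq E_2\oplus E_1^{k_1}.
\]
Exchanging the roles of $E_1$ and $E_2$, and using that $E_2$ is full, I get $k_2\in\mathbb N$ with
\[
E_2^{k_2+1}\simeq E_1\oplus E_2^{k_2}.
\]
Replacing $k_1,k_2$ by their maximum $k$, both relations hold with the same exponent $k$.

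The key step is then an easy induction on $j\ge 0$: starting from $E_1^{k+j}\simeq E_2^j\oplus E_1^k$ (trivial for $j=0$), one computes
\[
E_1^{k+j+1}\simeq E_1\oplus E_2^j\oplus E_1^k\simeq E_2^j\oplus E_1^{k+1}\simeq E_2^j\oplus E_2\oplus E_1^k=E_2^{j+1}\oplus E_1^k,
\]
using the first relation in the last step. In the same way, the symmetric relation yields $E_2^{k+j}\simeq E_1^j\oplus E_2^k$ for all $j\ge 0$.

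Specializing both identities at $j=k$ gives
\[
E_1^{2k}\simeq E_2^k\oplus E_1^k\simeq E_1^k\oplus E_2^k\simeq E_2^{2k},
\]
so $n=2k$ works. There is no real obstacle here; the only subtlety to watch is that Lemma \ref{full1} only produces \emph{stable} isomorphisms with an $E_1^k$ (resp.\ $E_2^k$) summand, so one cannot hope to cancel that summand directly, and instead one must feed the two stabilized isomorphisms into each other so that the leftover $E_1^k$ and $E_2^k$ summands end up balancing symmetrically.
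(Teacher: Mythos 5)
Your proof is correct and follows essentially the same path as the paper: both arguments extract from Lemma \ref{full1} the two stabilized isomorphisms $E_1^{k+1}\simeq E_2\oplus E_1^k$ and $E_2^{k+1}\simeq E_1\oplus E_2^k$ (the paper phrases the iteration as a single swap $E_1^{a+1}\oplus E_2^b\simeq E_1^a\oplus E_2^{b+1}$ valid when $a\ge k$ or $b\ge k$), and then trade $E_1$ summands for $E_2$ summands step by step to reach $E_1^{2k}\simeq E_2^{2k}$. Your version just splits the chain into two symmetric inductions and reassembles at $j=k$; the content is the same.
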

 
\begin{proof} 
Since $E_1$ and $E_2$ are full, there exists $k$ such that $E_1^k\oplus E_1\simeq E_1^k\oplus E_2$ and $E_1\oplus E_2^k\simeq E_2\oplus E_2^k$.
   
Then $E_1^{a+1}\oplus E_2^b\simeq E_1^{a}\oplus E_2^{b+1}$, as soon as $a\ge k$ or $b\ge k$. We  obtain  $E_1^{2k}\simeq E_2^{2k}$. 
  \end{proof}

\subsection{Mapping cones}

The mapping cone of a morphism $\phi\colon A\to B$ of (unital) $C^*$-algebras, 
is the algebra $C_\phi=\{(a, \gamma)  \in A\oplus C_0((0,1], B) \;|\;  \gamma(1)=\phi(a)\}$.

Let now $A_0, A_1, B$ be unital $C^{*}$-algebras.
\begin{dfn}
Given two morphisms $\phi_i\colon A_i\to B$, $i=0,1$, define the double cylinder
algebra as
\begin{equation}
\label{dcylinder}
Z_{\phi_0, \phi_1}=\{(a_0, \s,a_1)  \text{ with }   a_i\in A_i, \s \in C ([0,1], B) \text{ s.t. }  \s(i)=\phi_i(a_i), i=0,1\}
\end{equation}
When $\phi_0=\phi_1=\phi$ we will denote the double cylinder $Z_{\phi_0, \phi_1}$ by  $Z_\phi$.

Note that for $A_0=0$ this construction is the cone $C_{\phi_1}$. When $A_0=B$ and $\phi_0$ is the identity of $B$, the double cylinder is sometimes called  the mapping cylinder of $\varphi_1$.
\end{dfn}
For the double cylinder $Z_\phi$ there is a split exact sequence
\begin{equation}
  \label{split}
  \xymatrix{ 0 \ar[r] &C_\phi \ar[r]_r &Z_\phi \ar[r]_{\pi_1}& A\ar@/_/[l]	_s \ar[r] & 0}
  \end{equation}
with $r(a, \s)=(0,\s, a)$ and $\pi_1(a_0, \s,a_1)=a_1$, and the splitting given by $s(a)=(a,\underline{\phi({a})}, a)$
so that $K_*(Z_\phi)\simeq K_*(C_\phi)\oplus K_*(A)$. In particular, there is a map from $K_0(Z_\phi)$ to the summand $K_0(C_\phi)$ of the form $x\mapsto x- (s\circ \pi_1)_*x$.

Note that if $\phi$ is injective, then  $$C_\phi=\{\gamma \in C_0((0,1], B) \;|\;  \gamma(1)\in \phi(A)\}\ \ \hbox{and}\ \ Z_\phi=\{\gamma  \in C([0,1], B) \;|\;  \gamma(0),\gamma(1)\in \phi(A)\}.$$

If $B$ is a ${\rm II}_1$-factor,  the inclusion $i_0\colon \C\to B$ gives  $K_0(C_{i_0})=0$ and $ K_1(Z_{i_0})=K_1(C_{i_0})=\R/\Z$.

\subsection{Relative $K$-theory}
\label{relKth}
Although relative $K$-theory coincides with the $K$-theory of a mapping cone, it helps in giving a more direct description of some $K$-theory elements. For instance, it has been recently used by Deeley to construct a model of $K$-homology with $\R/\Z$-coefficients \cite[Prop. 2.2,  Ex. 5.3]{De}.

 Let $\varphi : A \to B$ be a homomorphism of unital $C^{*}$-algebras. 
The group $K_{0}(\varphi)$ is given by generators and relations: 
\begin{itemize}
\item Its generators are triples $(E_{+},E_{-},u)$ where $E_{+}, E_{-} $ are finitely generated projective $A$-modules,  and $u \colon E_+\otimes_A B\to E_-\otimes_A B$ is an isomorphism.

\item Addition is given by direct sums.

\item A homotopy is a triple associated with the map $C([0,1];A)\to C([0,1];B)$ induced by $\varphi$.

\item Trivial elements are  those triples $(E_{+},E_{-},u)$ for which $u = \varphi(v)$ for some isomorphism $v\colon E_+\to E_-$.  The group $K_0(\varphi)$ is formed as the set of those triples divided homotopy and addition of trivial triples.

\item For non unital algebras, we just put $K_{0}(\varphi)=K_{0}(\tilde \varphi)$, where $\tilde \varphi:\tilde A\to \tilde B$ is obtained by adjoining units everywhere.

\item The group $K_1(\varphi)$ is also formed in this way. Its generators are pairs $(u,f)$ where $u$ is a unitary in $M_n(A)$ and $f$ is a continuous path in the unitaries of $M_n(B)$ joining $\varphi(u)$ to $1_n$.

\item There is a natural isomorphism $K_*(\varphi)\simeq K_*(C_\varphi)$. This morphism is almost tautological for $K_1$. For $K_0$, it is described as follows.  Assume that $\varphi$ is unital and use the isomorphism $K_*(Z_\varphi)=K_*(C_\varphi)\oplus K_*(A)$ associated the split exact sequence \eqref{split}. 
The class of a triple $(E_{+},E_{-},U)\in K_0(\varphi)$ is then the image in the summand $K_0(C_\varphi)$ of the $Z_\varphi$-module 
$$F_U=\{(x_+;f;x_-)\in E_+\times C\left([0,1];E_+\otimes_A B\right)\times E_-;\ f(0)=x_+\otimes 1 \ \hbox{and}\ Uf(1)=x_-\otimes 1\}.
$$

 Let $e^+$, $e^- \in M_n(A)$ be projections such that $E_+=e^+ A^n$, $E_-=e^- A^n$ and $u\in M_n(B)$ such that $u^*u=\phi (e^+)$, $uu^*=\phi (e^-)$ and satisfies $ux=U(x)$ for $x\in e_+A^n=E_+$. Let  $f,g\in C([0,1])$ be given by $f(t)=\cos \frac {t\pi}2$ and $g(t)=\sin \frac {t\pi}2$. Then we can write $F_U=eZ_\varphi^{2n}$ where $$e=\left(\begin{pmatrix} e^+&0\\0&0\end{pmatrix}, \begin{pmatrix} f^2\varphi(e^+)&fg u^*\\fg u&g^2 \varphi(e^-)\end{pmatrix},\begin{pmatrix} 0&0\\0&e^-\end{pmatrix}
\right)\in M_{2n}(Z_\varphi).$$
\end{itemize}

\subsection{The bootstrap category}

The bootstrap category is the smallest class $\cN$ of separable nuclear $C^*$-algebras containing commutative ones and closed under $KK$-equivalence \cite[22.3.4]{Bl}.

Every $C^*$-algebra $A$ in the bootstrap category satisfies the K\"unneth formula for tensor products \cite[Theorem 23.1.3]{Bl}; i.e. for every  $C^*$-algebra $B$
\begin{equation}
\label{Kuenneth}
 \xymatrix{
 0\ar[r]^{}& K_{\bullet}(A)\otimes K_{\bullet}(B) \ar[r]^-{\a} &K_{\bullet}(A\otimes B)\ar[r]^-{\s}& \Tor ^{\Z}_{1}(K_{\bullet}(A), K_{\bullet}(B))\ar[r]^{}& 0\\
  }
\end{equation}
where we denote $K_{\bullet}=K_0\oplus K_1$ as a graded group. Here $\a$ has degree $0$, and $\s$ has degree $1$. If one of $K_{\bullet}(A)$ and $K_{\bullet}(B)$ is torsion free, then $\a$ is an isomorphism.
\begin{rem}
If $A$ is in the bootstrap category and $B$ is a von Neumann algebra, then $K_1(B)=0$ and $K_0(B)$ is torsion free, see section \ref{KvN}. Therefore $K_*(A)\otimes K_0(B) \simeq K_*(A\otimes B)$.
\end{rem}

\subsection{$K$-theory of von Neumann algebras}
\label{KvN}
Any von Neumann algebra $A$ is uniquely decomposed in a product
$$A=A_{{\rm I}_f}\times A_{{\rm I}_{\infty}}\times A_{{\rm II}_1}\times A_{{\rm II}_{\infty}}\times A_{{\rm III}},$$
 consequently the $K$-groups uniquely split
$$K_{*}(A)=K_{*}(A_{{\rm I}_f})\times K_{*}( A_{{\rm I}_{\infty}})\times K_{*}( A_{{\rm II}_1})\times K_{*}(A_{{\rm II}_{\infty}})\times K_{*}(A_{{\rm III}}).$$  It is easy to see that if $A$ is properly infinite then $K_0(A)=0$ while it is always the case that $K_1(A)=0$. It follows that the $K$ theory is reduced to that of the finite piece
 $$K_0(A)=K_0(A_{{\rm I}_f})\oplus K_0(A_{{\rm II}_{1}}).$$
 
 If $A$ is finite, there is a unique center valued trace $\operatorname{tr}_A^u:A\longrightarrow Z(A)$ with the property that two projections $p,q\in M_n(A)$ are equivalent if and only if $\operatorname{tr}_A^u(p)=\operatorname{tr}_A^u(q)$. Then it follows from the universal property of $K_0$ that $\operatorname{tr}_A^u$ defined on projections extends to a \emph{von Neumann center valued dimension} which is an injection
 $$\operatorname{dim}^u_A:K_0(A)\longrightarrow Z(A)_{sa}:=\{a\in Z(A), \, a=a^*\}.$$ 
 If $A$ is type ${\rm II}_1$, this is an isomorphism.


\section{$K$-theory with coefficients}

A model for the $K$-theory of a $C^*$-algebra $A$ with coefficients in a countable abelian group $\Lambda$ is $K_*(A;\Lambda):=K_*(A\otimes B_\Lambda)$ where $B_\Lambda$ is a $C^*$-algebra in the bootstrap category with a specified isomorphism $K_0(B_\Lambda)\simeq \Lambda$ and such that $K_1(B_\Lambda)=0$. If $\Lambda$ is uncountable, such a model can be provided using (uncountable) inductive limits.

In this section, we describe $K$-theory with $\R/\Z$-coefficients in terms of von Neumann algebras. The two models coincide for $C^*$-algebras in the bootstrap category. This von Neumann description is suitable for the construction of the $K$-theory class of a flat bundle given in section \ref{section5}.

In order to relate our model with the one of Atiyah, Patodi and Singer \cite[Sec. 5]{APS3}, we briefly discuss models for $K$-theory with coefficients in $\Q,\ \R,\ \Z/n\Z$ and $\Q/\Z$ and compare them with the respective versions in \cite{APS2}.

\subsection{$K$-theory with rational and real coefficients}


For a (locally) compact space $X$, Atiyah--Patodi--Singer's model is  $K^*_{APS}(X, \mathbb Q):=K^*(X)\otimes \Q$ and $K^*_{APS}(X, \R):=K^*(X)\otimes \R$. More generally, we may set $K_*^{APS}(A, \mathbb Q):=K_*(A)\otimes \Q$ and $K_*^{APS}(A, \R):=K_*(A)\otimes \R$ for any $C^*$-algebra $A$.

Our description of $K$-theory with rational coefficients is the following.

\begin{dfn}
Let $D$ be the universal UHF algebra, \textit{i.e.} the one which satisfies $K_0(D)=\Q$. It is  obtained as an inductive limit of matrices, $D=\displaystyle\bigotimes_{n\in \mathbb{N}}M_n(\C)$.
Define

\begin{equation}
\label{K.Q}
K_*(A, \mathbb Q):=K_{*}(A\otimes D)
\end{equation}
\end{dfn}

Since $D$ is in the bootstrap category, $K_1(D)=0$ and $K_0(D)=\Q$, the K\"unneth formula gives a natural isomorphism $K_*(A)\otimes \Q\to K_*(A\otimes D)$, and therefore our $K$-theory with rational coefficients coincides with $K_*^{APS}(A, \mathbb Q)$.

In order to describe $K$-theory with real coefficients, we use the following result:

\begin{lem}
Let $B$ be a ${\rm II}_1$-factor, and  $A$ be a $C^*$-algebra in the bootstrap category. Then the group $K_*(A\otimes B)$ is canonically isomorphic to $K_*(A)\otimes \R$, and therefore it does not depend on $B$ up to a canonical isomorphism.
\end{lem}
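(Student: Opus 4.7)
The plan is to deduce the statement directly from the Künneth formula \eqref{Kuenneth}, after computing $K_*(B)$ for any ${\rm II}_1$-factor $B$. First I would recall from section \ref{KvN} that, since $B$ is a finite von Neumann algebra, $K_1(B)=0$ and the center-valued dimension $\dim^u_B:K_0(B)\to Z(B)_{sa}$ is an injection; since $B$ is a factor, $Z(B)=\C$, so $\dim^u_B$ takes values in $\R$ and is just the scalar trace. Because $B$ is of type ${\rm II}_1$, every value in $[0,1]$ is realized on projections of $B$, hence every value in $[0,n]$ is realized on projections of $M_n(B)$, so $\dim^u_B:K_0(B)\to\R$ is a canonical isomorphism.

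Next I would plug this into the Künneth short exact sequence. Since $A$ lies in the bootstrap category, \eqref{Kuenneth} holds for $A\otimes B$. With $K_1(B)=0$ and $K_0(B)\cong\R$ torsion-free, the $\Tor$ term vanishes and the map $\alpha$ becomes an isomorphism of graded groups
\begin{equation*}
\alpha:K_*(A)\otimes K_0(B)\xrightarrow{\ \sim\ } K_*(A\otimes B),
\end{equation*}
which combined with the trace isomorphism gives the desired canonical identification $K_*(A)\otimes\R\xrightarrow{\sim} K_*(A\otimes B)$.

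For the independence of $B$, given another ${\rm II}_1$-factor $B'$ I would form the ${\rm II}_1$-factor $B\otimes B'$ (tensor product of factors remains a factor, and the product trace is again the unique normalized trace). The unital inclusions $\iota:B\hookrightarrow B\otimes B'$ and $\iota':B'\hookrightarrow B\otimes B'$ are trace-preserving, hence induce on $K_0$ the identity $\R\to\R$ via the trace isomorphism. Applying $K_*(A\otimes -)$ and naturality of the Künneth map $\alpha$, the two identifications $K_*(A\otimes B)\cong K_*(A)\otimes\R$ and $K_*(A\otimes B')\cong K_*(A)\otimes\R$ fit into a commuting diagram with the identification for $B\otimes B'$, so composing gives a canonical isomorphism $K_*(A\otimes B)\cong K_*(A\otimes B')$ independent of the choices.

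The only genuinely delicate point is the applicability of the Künneth formula to the tensor product with a non-separable $C^*$-algebra $B$; I would handle this either by invoking the formulation of \cite[Thm.~23.1.3]{Bl} (which needs only one factor in the bootstrap class) or, if preferred, by writing $B$ as an inductive limit of separable $C^*$-subalgebras whose $K$-theory maps to $\R$ agree in the limit with the trace, and passing to the limit in the Künneth sequence. Once this is granted, the proof is entirely formal.
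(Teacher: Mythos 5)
Your proof is correct and follows essentially the same route as the paper's one-line argument, which invokes the K\"unneth formula and records the isomorphism as the canonical map $[x]\otimes t\mapsto[x\otimes p_t]$, with the independence of $B$ treated (as you do) by observing that any unital inclusion of ${\rm II}_1$-factors induces the identity on $K_0\cong\R$. One small correction: in your independence step you should form the von Neumann tensor product $B\,\overline\otimes\, B'$ rather than a $C^*$-tensor product, since the latter need not be a factor (nor a von Neumann algebra at all); and the separability concern is largely moot because a ${\rm II}_1$-factor is unital, hence $\sigma$-unital.
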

\begin{proof}
The K\"unneth isomorphism
\begin{equation}
\label{Kunnet}
 \xymatrix{
 K_0(A)\otimes \R \ar[r]^-{\sim} &K_0(A\otimes B)\\
  }
\end{equation} is given by the canonical map $[x]\otimes t\mapsto [x\otimes p_t]$.\end{proof}

In particular, if $B_1$ is a ${\rm II}_1$-subfactor of $B_2$, the induced map $K_*(A\otimes B_1)\longrightarrow K_*(A\otimes B_2)$  is a canonical isomorphism, independent of the inclusion $B_1\subset B_2$.
In the following, $B$ will always denote a ${\rm II}_1$-factor. Then for $A$ in the bootstrap category we can define, independently of $B$ a model for $K$-theory with coefficients in $\R$, which is naturally isomorphic with the APS one.
\begin{dfn} Let $A$ be in the bootstrap category.
  \label{K*R}
    \begin{equation}\label{K.R}
K_*(A, \R):=K_*(A\otimes B)
    \end{equation}
\end{dfn}

\subsection{$K$-theory with $\Z/n\Z$-coefficients}

Let $i_n\colon \C\hookrightarrow M_n(\C)$ be defined by $i_n(1)=\Id$.

\begin{dfn}
  \label{K*Z/nZ}
    \begin{equation}\label{K.Z/nZ}
K_*(A, \Z/n\Z):=K_{*-1}(A\otimes C_{i_n})
    \end{equation}
\end{dfn}

Let us relate it with the $K$-theory with $\Z/n\Z$-coefficients as defined by Atiyah--Patodi--Singer in \cite[Sec. 5]{APS2}.

Let $f_n:S^{1}\to S^1$ given by $f_n(z)=z^n$. The APS definition of the $K$-theory with $\Z/n\Z$-coefficients for a closed (locally) compact space $X$ is the relative $K$-theory of $f_n\times \id_X:S^1\times X\to S^1\times X$, \cite[(5.2)]{APS2}.

Define $f_n^{*}\colon C(S^{1})\to C(S^{1})$ by $f_n^{*}(\phi)(z)=(\phi\circ f_n)(z)=\phi(z^{n})$.

The $K$-theory of the cone algebras $C_{i_n}$ and $C_{f_n^{*}}$ are computed by means of the exact sequences of the cones and give 
$K_1(C_{i_n})\simeq K_{0}(C_{f_n^{*}})\simeq \Z/n\Z$,  and $K_0(C_{i_n})\simeq K_{1}(C_{f_n^{*}})=0$.

Note that the inclusion $SM_n(\C)\to C_{i_n}$ induces the quotient map $\Z\to \Z/n\Z$ in $K_1$, and therefore the isomorphism $K_1(C_{i_n})\simeq  \Z/n\Z$ is canonical; in the same way, the isomorphism $K_{0}(C_{f_n^{*}})\simeq \Z/n\Z$ is canonical, and there is therefore a canonical isomorphism $\psi:K_1(C_{i_n})\to K_0(C_{f_n^{*}})$. By the UCT  \cite[Prop. 23.10.1]{Bl}, $KK^{1}(C_{i_n},C_{f_n^{*}})$ is isomorphic to $\Hom(\Z/n\Z,\Z/n\Z)$. The isomorphism $\psi$ determines therefore a generator of $KK^{1}(C_{i_n},C_{f_n^{*}})$. 

It follows that $C_{i_n}$ and $C_{f_n^{*}}$ are naturally $KK^1$-equivalent, or, equivalently, the algebras $SC_{i_n}$ and $C_{f_n^{*}}$ are canonically $KK$-equivalent.

It is of course possible to give an explicit construction of the canonical element of $KK^{1}(C_{i_n},C_{f_n^{*}})$.

As a consequence, let $A$ be any separable $C^*$-algebra. Denote now with $i_{n, A}\colon A\hookrightarrow A\otimes M_n(\C)$, $f_{n, A}^{*}\colon C(S^{1}, A)\to C(S^{1}, A)$ the analogous maps as above. Then $C_{i_{n,A}}=A\otimes  C_{i_n}$ and $C_{f_{n,A}^*}=C_{f_n^{*}}\otimes A$ are canonically $KK^{1}$-equivalent.

In particular, for a (locally) compact space $X$,
$
K^*_{APS}(X, \Z/ n\Z)\simeq K_{*+1}( C_0(X)\otimes C_{i_{n}})\ .
$

In other words, the definition \ref{K.Z/nZ} of $K$-theory of $\Z/n\Z$ coefficients coincides (for abelian $C^*$-algebras) with the one given in \cite{APS2}.

\subsection{$K$-theory with $\Q/\Z$-coefficients}

\begin{dfn}
Let $A$ be in the bootstrap category. Let $D$ be  the universal UHF algebra, and $A\hookrightarrow A\otimes D$ the obvious inclusion,  define
\begin{equation}
\label{K.QZ}
K_*(A, \mathbb Q/\Z):=K_{*-1}(C_{A\hookrightarrow A\otimes D})
\end{equation}
\end{dfn}

\begin{rem} For a (locally) compact space $X$, $K^*_{APS}(X, \mathbb Q/\Z)$ is defined as the inductive limit of $K^*_{APS}(X, \Z/ n!\Z)$ \cite[(5.3)]{APS2}. More generally, for every $C^*$-algebra $A$, we may set $K_*^{APS}(A, \mathbb Q/\Z):=\varinjlim_n K_*^{APS}(A, \Z/ n!\Z)$. Observe that the construction in APS can be made using the projective limit. If
\begin{equation*}
\label{telesc}
 \xymatrix{
T=\varprojlim \, ....\ S^1\ar[r]^-{\times n} &S^1\ar[r]^-{\times (n+1)} &S^1\ ... \\
  }
\end{equation*}
then $K_*^{APS}(A, \mathbb Q/\Z)$ is the relative $K$-theory of $A\otimes C(S^1)\to A\otimes C(T)$. Note also that $C(T)=C^*(\Q)$.

We have 
  $C_{A\hookrightarrow A\otimes D}=\varinjlim_n  C_{A\hookrightarrow A\otimes M_{n!}(\C)}$. As $K_*(A, \Z/n\Z)\simeq K_*^{APS}(A, \Z/n\Z)$, using continuity of $K$-theory, we find $K_*(A, \mathbb Q/\Z)\simeq K_*^{APS}(A, \mathbb Q/\Z)$.
\end{rem}

\subsection{$K$-theory with $\R/\Z$-coefficients}
\begin{dfn}
\label{K*RZ}
We propose the following realization of $\R/\mathbb{Z}$-$K$-theory of a $C^{*}$-algebra $A$ in the bootstrap category.
$$K_*(A,\R/\Z):=K_{*+1}(\operatorname{Cone}(A\hookrightarrow A\otimes B)),$$ 
where $B$ is any ${\rm II}_1$-factor. Note that Definitions \ref{K*R} and \ref{K*RZ} are  well posed: first of all since $A$ is nuclear there is no ambiguity in the $C^*$-tensor product $A\otimes B$. Furthermore the right hand side is in both definitions independent, up to natural canonical isomorphism, of the factor $B$. 
\end{dfn}
The latter definition is indeed a realization of $\R/\mathbb{Z}$-$K$-theory. 
The Bochstein change of coefficients is the long exact sequence 
associated to the mapping cone $C^*$-exact sequence. 

Atiyah, Patodi and Singer's description of $\R/\mathbb{Z}$-$K$-theory goes back to an idea of Segal \cite[Sec. 5]{APS3}. It is made up of two addenda. The first component is the torsion part, contained in $K^*(X,\mathbb{Q}/\mathbb{Z})$. The free part is in $K^*(X,\R)=K^*(X)\otimes \R$. More precisely
$$K_{APS}^*(X,\R/\mathbb{Z}):=\operatorname{cokernel}\{(p,-j):K^*(X,\mathbb{Q})\longrightarrow K^*(X,\mathbb{Q}/\mathbb{Z})\oplus K^*(X,\R)\}$$ where $p$ is the natural projection and $j$ is the natural injection.

We may of course put $$K^{APS}_*(A,\R/\mathbb{Z}):=\operatorname{cokernel}\{(p,-j):K_*(A,\mathbb{Q})\longrightarrow K_*(A,\mathbb{Q}/\mathbb{Z})\oplus K_*(A,\R)\}$$

Finally we prove: 
\begin{prop}
\label{RZvsAPS}
For an algebra $A$ in the bootstrap category, our realization of $K_*(A, \R/\Z)$ coincides with $K^{APS}_*(A,\R/\mathbb{Z})$.
\end{prop}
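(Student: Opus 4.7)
The plan is to realize both $K_*(A,\R/\Z)$ and $K^{APS}_*(A,\R/\Z)$ via the Bockstein six-term sequences associated to the coefficient short exact sequences $0\to\Z\to\Q\to\Q/\Z\to 0$ and $0\to\Z\to\R\to\R/\Z\to 0$, and then carry out a diagram chase on the induced commutative ladder. First I choose an embedding $D\hookrightarrow B$ (which exists since any $\mathrm{II}_1$-factor contains the hyperfinite one, which contains the UHF algebra $D$); the tower of inclusions $A\hookrightarrow A\otimes D\hookrightarrow A\otimes B$ then produces a morphism between the two mapping cone extensions
$$0\to S(A\otimes D)\to C_{\varphi_1}\to A\to 0 \quad\hbox{and}\quad 0\to S(A\otimes B)\to C_{\varphi_2}\to A\to 0,$$
where $\varphi_1,\varphi_2$ are the unital inclusions. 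Passing to $K$-theory and inserting the definitions of the coefficient groups yields the commutative ladder
\[
\begin{CD}
\cdots @>>> K_n(A) @>>> K_n(A,\Q) @>{p}>> K_n(A,\Q/\Z) @>{\delta_1}>> K_{n-1}(A) @>>> \cdots\\
@. @| @VV{j}V @VV{i}V @| @.\\
\cdots @>>> K_n(A) @>>> K_n(A,\R) @>{q}>> K_n(A,\R/\Z) @>{\delta_2}>> K_{n-1}(A) @>>> \cdots
\end{CD}
\]
with exact rows, in which $j$ and $i$ are identified with the maps induced by the natural inclusions $\Q\hookrightarrow\R$ and $\Q/\Z\hookrightarrow\R/\Z$.

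Next I define $\Phi:K_n(A,\Q/\Z)\oplus K_n(A,\R)\to K_n(A,\R/\Z)$ by $(x,y)\mapsto i(x)+q(y)$. Commutativity of the middle square gives $i\circ p=q\circ j$, hence $\Phi\circ(p,-j)=0$ and $\Phi$ descends to a natural map $\bar\Phi:K^{APS}_n(A,\R/\Z)\to K_n(A,\R/\Z)$. The pivotal observation for the chase is that
$$\operatorname{image}(\delta_1)=\operatorname{image}(\delta_2)=\operatorname{torsion}(K_{n-1}(A)),$$
since both equal the kernel of the rationalisation $K_{n-1}(A)\to K_{n-1}(A)\otimes\Q$, respectively $\otimes\,\R$, and torsion dies in any embedding of an abelian group into a torsion-free one. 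This step uses the K\"unneth identifications $K_*(A,\Q)\simeq K_*(A)\otimes\Q$ and $K_*(A,\R)\simeq K_*(A)\otimes\R$ valid since $A$ is in the bootstrap category.

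For surjectivity of $\bar\Phi$, given $z\in K_n(A,\R/\Z)$ I pick $x\in K_n(A,\Q/\Z)$ with $\delta_1(x)=\delta_2(z)$; commutativity gives $\delta_2(z-i(x))=0$, so $z-i(x)=q(y)$ for some $y$, yielding $z=\Phi(x,y)$. For injectivity on the APS cokernel, if $\Phi(x,y)=0$ then $\delta_1(x)=\delta_2(i(x))=-\delta_2(q(y))=0$, so by exactness $x=p(z)$ for some $z\in K_n(A,\Q)$; next $q(y+j(z))=-i(x)+i(p(z))=0$ forces $y+j(z)=j(\tilde w)$ with $\tilde w\in K_n(A,\Q)$ the image of some $w\in K_n(A)$, and $p(\tilde w)=0$ by exactness of the top row (two consecutive maps). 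Setting $z':=z-\tilde w$ gives $p(z')=x$ and $-j(z')=y$, so $(x,y)$ lies in the image of $(p,-j)$. The main obstacle I anticipate is the naturality verification: one must check that the map $i$ coming from the morphism $C_{\varphi_1}\to C_{\varphi_2}$ induced by $D\hookrightarrow B$ really agrees, canonically and independently of the chosen embedding, with the map $K_*(A,\Q/\Z)\to K_*(A,\R/\Z)$ determined by the coefficient inclusion $\Q/\Z\hookrightarrow\R/\Z$; this reduces, via the K\"unneth theorem and the universal coefficient theorem in the bootstrap category, to the compatibility of the inclusion of coefficient algebras $D\hookrightarrow B$ with the specified isomorphisms $K_0(D)\simeq\Q$ and $K_0(B)\simeq\R$.
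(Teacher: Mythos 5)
Your proof is correct, but it takes a genuinely different route from the paper's. You set up the commutative Bockstein ladder comparing the two mapping-cone long exact sequences for $A\hookrightarrow A\otimes D$ and $A\hookrightarrow A\otimes B$, identify the map $\Phi(x,y)=i(x)+q(y)$, and run a hands-on diagram chase; the pivot is your observation that $\mathrm{image}(\delta_1)=\mathrm{image}(\delta_2)=\mathrm{torsion}(K_{n-1}(A))$, which follows from the K\"unneth identifications $K_*(A,\Q)\simeq K_*(A)\otimes\Q$ and $K_*(A,\R)\simeq K_*(A)\otimes\R$ and the flatness of $\Q,\R$ over $\Z$. The paper instead builds a commutative square of $C^*$-algebras using auxiliary double-cone algebras $V_D$, $V_B$ (each weakly $K$-equivalent to the relevant suspension), checks that the square has a surjective leg so a Mayer--Vietoris six-term sequence applies, and then uses injectivity of $j_*\colon K_*(A\otimes D)\to K_*(A\otimes B)$ (again the rationalization-into-$\R$ fact) to cut the Mayer--Vietoris sequence into the short exact sequence exhibiting $K_{*+1}(A\otimes C_{i_B})$ directly as the cokernel of $(p,-j)$. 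Your approach is more elementary and transparent about exactly which piece of exactness produces the cokernel description, at the cost of more steps in the chase; the paper's Mayer--Vietoris argument is more compressed but requires verifying that the square of algebras meets the hypotheses of Mayer--Vietoris. Your flagged concern about naturality of $i$ is legitimate but benign: since the universal UHF algebra $D$ is simple with a unique tracial state, any unital embedding $D\hookrightarrow B$ induces the trace-preserving inclusion $\Q\hookrightarrow\R$ on $K_0$, so the map on mapping cones is canonically the one determined by the coefficient inclusion $\Q/\Z\hookrightarrow\R/\Z$.
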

\begin{proof}
 Let $D$ be the universal UHF algebra, $B$ a ${\rm II}_1$-factor, and denote with $i_D\colon \C\hookrightarrow D$ and $i_B\colon \C\hookrightarrow B$ the obvious inclusions. It is enough to prove that
\begin{equation}
  \label{realizK}
  K_{*}(A\otimes C_{i_B})=\Coker \left\{(p, -j) \colon K_{*+1}(A\otimes D)\longrightarrow  K_{*}(A\otimes C_{i_D})\oplus K_{*+1}(A\otimes B)\right\}
\end{equation}
where $p$ is induced from the natural map $(C(M)\otimes D)(]0,1[)\longrightarrow C_{i_D}$ and $j$ is induced from any unital inclusion $D\hookrightarrow B$.
To verify \eqref{realizK}, consider the diagram 
$$
\xymatrix{A\otimes V_D\ar[r]^p\ar[d]_j& A\otimes C_{i_D}\ar[d]\\
A\otimes V_B\ar[r]_h&A\otimes C_{i_B} }$$
where $V_B=\left\{(a,s)\in C_0(]-1,0])\times B([0, 1[) \text{ s.t. }  i_B(a(0))=r(0)\right\}$ and $V_D$ is defined in analogous way. Note that the inclusion $A\otimes SB\to A\otimes V_B$ induces an isomorphism in $K$-theory as follows from the exact sequence $0\to (A\otimes B)(]0,1[)\to V_B\to A(]{-}1,0])\to 0$. Moreover $h$ is surjective. Therefore, we have an associated Mayer-Vietoris exact sequence. Now $j_*:K_*(A\otimes V_D)\simeq K_{*+1}(A\otimes D)\longrightarrow  K_{*+1}(A\otimes B)\simeq K_*(A\otimes V_D)$ is injective.

Finally the Mayer--Vietoris sequence gives 
$$
0\longrightarrow K_*(A\otimes D)\longrightarrow  K_{*+1}(A\otimes C_{i_D})\oplus K_*(A\otimes B)\longrightarrow K_{*+1}(A\otimes C_{i_B})\longrightarrow 0.\qedhere
$$
\end{proof}

\begin{rem}
There are in the literature several other realizations of $\R/\mathbb{Z}$-$K$-theory:
\begin{description}
\item[Karoubi's desription] is a cohomological interpretation based on Chern--Weil and Chern--Simons theory \cite{Ka2} - this will be explained below. At first sight, it works only for manifolds but as pointed out by Atiyah Patodi and Singer it can be generalized to compact spaces by an embedding trick. Cycles for $K^1(X,\R/\mathbb{Z})$ are triples $((E,\nabla^E),(F,\nabla^F),\omega)$ where $E,F$ are Hermitian bundles with Hermitian connections $\nabla^E,\nabla^F$ and $\omega$ is a transgression odd degree differential form such that
$$d\omega=\operatorname{Ch}(\nabla^E)-\operatorname{Ch}(\nabla^F).$$
A suitable notion of sum and equivalence of cycles is defined leading to an abelian group which is  a realization of $K^1(X,\R/\Z).$

\item[Basu's description] is a bundle theoretic relative description  \cite{Ba}. It can be thought of as a realization of the suggestion of Atiyah, Patodi and Singer to describe $K^1(X,\R/\Z)$ with bundles of von Neumann algebras. Cycles are couples of finite rank vector bundles $E,F$ such that there exists a von Neumann bundle $V$  (relative to a semi-finite or $\rm{II}_{\infty}$ von Neumann algebra) such that $E\otimes V\simeq F\otimes V$. 
Addition of cycles is defined by direct sum and there is a natural equivalence relation. The equivalence classes form a group which is a realization of
 $K^1(X,\R/\Z)$.
\end{description}
\end{rem}


\subsection{Chern--Weil theory}
\label{CW}
Let $M$ be a closed manifold. Then the group $K_*(C(M), \R)$ as defined above is isomorphic to $K^{*}_{APS}(M, \R):=K^{*}(M)\otimes \R$ as defined in \cite{APS2}. This is in turn isomorphic to $H^*(M;\R)$
 using Chern--Weil theory.

\medskip 
Let $A$ be a  unital $C^*$-algebra. Let $W\longrightarrow M$ be a smooth bundle of finitely generated Hilbert $A$-modules over the manifold $M$: using the Serre--Swann theorem, this is a finitely generated projective module over $C^\infty(M;A)$, endowed with a nondegenerate $C^\infty(M)$ valued scalar product. We refer to 
\cite{MF} and \cite[Sections 2,3]{Sc} for the general theory of such bundles. 
A connection $\nabla$ is extended as usual to forms 
$$\nabla(\omega \otimes s)=d\omega \otimes s + (-1)^{\operatorname{deg}\omega}\omega \nabla s, \quad \omega \in \Omega^*(M), \quad s\in \Gamma(W).$$
The curvature $\nabla^2$ is a $2$-form with values endomorphisms. If the connection is metric, the curvature is skew--adjoint.

\subsubsection{Chern--Weil theory for $K_0$}
If $\Omega=\nabla^2\in \Omega^2(M;\operatorname{End}_A(W))$ is the curvature, the exponential 
$$
\operatorname{exp}\Big(\frac{\Omega}{2i\pi}\Big):=\sum_{k}\dfrac{\Omega \wedge \cdot \cdot \cdot \wedge \Omega}{(2i\pi)^kk!}\;\in \, \Omega^{2*}(M;\operatorname{End}_A(W)) \simeq  \Omega^{2*}(M)\otimes _{C^\infty(M)}\operatorname{End}_A(W)
$$ 
is well defined as a finite sum.

A trace $\tau:A\longrightarrow \C$ extends to $M_n(A)$ and therefore to $\operatorname{End}_A(E)\simeq pM_n(A)p$ for every finitely generated projective $A$-module $E\simeq pA^n$ (where $p\in M_n(A)$ is a projection).
If $W\longrightarrow M$ is a finitely generated projective  $A$-module  bundle, we thus obtain a well defined trace, which is a $C(M)$-linear map still denoted $\tau:\operatorname{End}_A(W)\to C(M)$. The Chern character of $W$ associated with $\tau$ is defined as follows. Put
$$
\Ch{}_{\tau}(\Omega):=(\id \otimes \tau)\Big(\operatorname{exp}\Big(\frac{\Omega}{2i\pi}\Big)\Big)\; \in \,\Omega^{2*}(M;\C).
$$
From the trace property it follows that the Chern form is closed. Its cohomology class does not depend on the connection. If $\tau $ is self adjoint, this class is seen to be real by taking $\nabla$ metric.

\medskip Hilbert $A$-module bundles define the $K$-theory of a compact space with coefficients in $A$. Indeed $K(M;A)$ is the $K$-group of the category of finitely generated projective Hilbert $A$-module bundles. The Serre--Swann functor sending a bundle to the module of its continuous functions establishes a canonical isomorphism
$$K^0(M;A)\simeq K_0(C(M;A))\simeq K_0 (C(M)\otimes A).$$ 
If  $A$ is a $\rm{II}_1$-factor, from the commutativity of the diagram
\begin{equation}\label{chDiag}
\xymatrix{K^0(M)\otimes K_0(A)\ar[d]_{\operatorname{Ch}\otimes \tau}\ar[r]^-{\simeq}&K^0(M;A)\simeq K_0(C(M)\otimes A)\ar[dl]^{\operatorname{Ch}_{\tau}}\\
H^{2*}(M;\R)
}
\end{equation}
one gets that $\operatorname{Ch}_{\tau}$ is an isomorphism.


\subsubsection{Chern--Weil theory for $K_1$}


The odd $K$-theory with coefficients in $A$ is defined by 
$$K^1(M;A):=K^0_c(M\times \R;A)\simeq K_0(C(M)\otimes C_0(\R)\otimes A)=K_0(C(M)\otimes SA)$$ where $SA=C_0(\R;A)$ is the suspension of $A$.

We then write $K^1(M;A)\times K^0(M;A)=K^0(M\times S^1;A)$. Also $H^{2*}(M\times S^1,\C)\simeq H^{*}(M,\C)$. 
The map $H^{2k}(M\times S^1,\C)\to H^{2k-1}(M,\C)$ is obtained by integration along the $S^1$ fibers. Using these identifications, if $\tau $ is a trace on $A$, we have a commuting diagram:
$$
\xymatrix{K^0(M\times S^1;A)\ar[r]^{\operatorname{Ch}_{\tau}}\ar[d]& H^{2*}(M\times S^1,\C)\ar[d]\\
K^1(M;A)\ar[r]^{\operatorname{Ch}^{od}_{\tau}}&H^{2*+1}(M;\C) }
$$
which defines the odd $\operatorname{Ch}_{\tau}^{od}$ in the bottom line. In particular, if $A$ is a $\rm{II}_1$-factor, then $\Ch_\tau$ is an isomorphism.

 Let $W\longrightarrow S^1\times M$ be a smooth bundle of finitely generated Hilbert $A$-modules over $S^1\times M$ and $\nabla$ a connection on $W$. Write $S^1=\R/\Z$ and use $\nabla$ in order to trivialize the bundle $W$ along $[0,1]$. We thus obtain a constant bundle $W_0$ over $M$ and a family  $(\nabla_u)_{u\in [0,1]}$ of connections on $W_0$. In other words, we may write (over $M\times [0,1]$), $$\nabla=(\nabla_u)+du\frac{\partial}{\partial u}\,\cdot$$ 
 Then $\nabla^2=\dot{\nabla}_u\wedge du+\nabla_u^2$, whence:
 $$\operatorname{Ch}_{\tau}^{od}(W,\nabla)=(2\pi i)^{-1}\int_0^1\tau \left(\dot{\nabla}_u\exp \Big({\frac{{\nabla}_u^2}{2\pi i}}\Big)\right)du\in   \Omega^{2*+1}(M;\C).$$

\medskip Alternatively, an element of $K^1(M;A)$ is represented by a continuous (smooth) map $\a:M\longrightarrow \operatorname{U}_n(A)$. The corresponding bundle over $M\times S^1$ is obtained by gluing the trivial bundle $M\times [0,1]\times A^n$ by means of $\a$.

One then may compute $\Ch_\t$ for this bundle and get (\cite[Prop. 1.2]{G})
\begin{equation}\label{oddCh}
\Ch{}_\t(\alpha)=\sum_{k=0}^\infty (-1)^{k}\frac{k!}{(2k+1)! (2\pi i)^{k+1}}(\id\otimes \t)(\alpha^{-1}d\alpha)^{2k+1}\in \Omega^{2*+1}(M;\C)\ .
\end{equation}

\subsubsection{Chern--Simons class}

Let $A$ a $C^*$-algebra and $\tau\colon A\to \C$ a continuous trace.
For a bundle $E$ of finitely generated projective Hilbert $A$-modules equipped with two connections $\nabla_0,\nabla_1$, the odd Chern--Simons differential form is 
\begin{equation}
\label{CSdef}
   \operatorname{CS}_{\tau}(\nabla_0,\nabla_1):=(2\pi i)^{-1}\int_0^1\tau \left(\dot{\nabla}_u\exp \Big({\frac{{\nabla}_u^2}{2\pi i}}\Big)\right)du\in   \Omega^{2*+1}(M;\C)
\end{equation}
where $\nabla_u:=(1-u) \nabla_0+ u \nabla_1$. Recall that $d(\operatorname{CS}_{\tau}(\nabla_0,\nabla_1))=\Ch_\t(\nabla_0)-\Ch_\t(\nabla_1)$.

\begin{dfn}\label{fbi}
A flat Hilbert $A$-module bundle is a pair $(W, \nabla)$ where $W$ is a Hilbert $A$-module bundle, and $\nabla $ is a flat connection. 

A \emph{flat bundle isomorphism} between  two flat Hilbert $A$-module bundles $(W, \nabla^{W})$ and $(V, \nabla^{V})$ is a bundle isomorphism $\psi\colon W\longrightarrow V$ which preserves the connection.
\end{dfn}

We fix the following notation, used repeatedly in the following. If  $(W, \nabla^{W})$ and $(V, \nabla^{V})$ are flat Hilbert $A$-module bundles and $\a\colon W\longrightarrow V$ is a isomorphism which does not necessarely preserve the connection, we use \eqref{CSdef} to define the closed form
\begin{equation}
 \operatorname{CS}_\tau(\a^{*}\nabla^{V}, \nabla^{W})\in \O^{odd}(M;\C)\ .
\end{equation}

From the above discussion we immediately get
\begin{prop}\label{Ch=CS}
If $\alpha:V\to V$  is an automorphism of a flat Hilbert $A$-module bundle, then $\alpha $ defines a class in $K_1(C(M)\otimes A)$ and the classes $\operatorname{Ch}_\t(\a)$ and $\operatorname{CS}_\tau(\a^*\nabla^{V}, \nabla^{V})$  in $H^*(M;\C)$ coincide. \hfill$\square$
\end{prop}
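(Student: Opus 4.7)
The strategy is to realize both sides as the fibre integration over $S^1$ of one and the same closed form on $M\times S^1$, arising as the even Chern character of the mapping torus of $\a$ equipped with a natural interpolating connection.

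First, following the discussion preceding \eqref{oddCh}, I would represent the $K_1$-class of $\a$ by the mapping-torus bundle $W_\a \to M\times S^1$ formed by gluing the two ends of $V\times[0,1]$ via $\a$, in the orientation convention that represents $[\a]$ rather than $[\a^{-1}]=-[\a]$. Equip $W_\a$ with the interpolating connection whose restriction to the slice $M\times\{u\}$ is the convex combination $\nabla_u:=(1-u)\,\a^{*}\nabla^{V}+u\,\nabla^{V}$, extended by $du\,\partial/\partial u$ in the $S^1$-direction. Because $\a^{*}\nabla^{V}=\a^{-1}\circ\nabla^{V}\circ\a$ on sections, one checks directly that $\nabla_u$ is compatible with the clutching and descends to a global connection $\nabla$ on $W_\a$.

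Second, since both $\nabla^{V}$ and $\a^{*}\nabla^{V}$ are flat, the curvature of $\nabla$ equals $\dot{\nabla}_u\wedge du+\nabla_u^2$, which is exactly the situation treated in the derivation of $\operatorname{Ch}^{od}_{\t}$ given just before the proposition. Applying that formula yields
$$\operatorname{Ch}_{\t}(\a)\;=\;\operatorname{Ch}^{od}_{\t}(W_\a,\nabla)\;=\;(2\pi i)^{-1}\!\int_0^1 \t\!\left(\dot{\nabla}_u\exp\!\left(\frac{\nabla_u^2}{2\pi i}\right)\right)du,$$
and the right-hand side is, verbatim from Definition \eqref{CSdef}, the Chern--Simons form $\operatorname{CS}_{\t}(\a^{*}\nabla^{V},\nabla^{V})$. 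The equality of cohomology classes then follows at the level of differential forms, and the rest is the definitional fact that $[\a]\in K_1(C(M)\otimes A)$ is represented by the clutching bundle $W_\a$.

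The main obstacle, I expect, is not conceptual but rather the sign/orientation bookkeeping: one must verify that the chosen clutching direction represents $[\a]$ (and not $[\a^{-1}]$) in $K^1(M;A)$ \emph{and} simultaneously places $\a^{*}\nabla^{V}$ at the endpoint $u=0$, so that the fibre-integrated even Chern character outputs $\operatorname{CS}_{\t}(\a^{*}\nabla^{V},\nabla^{V})$ rather than its opposite $\operatorname{CS}_{\t}(\nabla^{V},\a^{*}\nabla^{V})=-\operatorname{CS}_{\t}(\a^{*}\nabla^{V},\nabla^{V})$. Once this is pinned down, the argument is essentially a one-line application of the Chern--Weil formalism collected in section \ref{CW}.
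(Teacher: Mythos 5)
Your proposal is correct and fills in exactly the argument the paper leaves implicit when it writes ``from the above discussion we immediately get'': form the mapping torus of $\a$ over $M\times S^1$, equip it with the interpolating connection $\nabla_u=(1-u)\,\a^*\nabla^V+u\,\nabla^V$ (which descends precisely because the required matching condition is $\nabla_0=\a^*\nabla_1$), and observe that the formula for $\operatorname{Ch}^{od}_\tau(W,\nabla)$ is verbatim the Chern--Simons integral \eqref{CSdef}. Your caution about the clutching orientation is well placed since the paper never pins it down explicitly; note also that the decomposition $\nabla^2=\dot\nabla_u\wedge du+\nabla_u^2$ holds for any family of connections, independently of flatness.
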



\bigskip
\section{Atiyah's theorem for covering spaces }

A key ingredient in our construction of the element $[\alpha]\in K^1(M;\R/\Z)$ is the fact that every flat bundle on $M$ with fibers a (finite) von Neumann algebra is trivial  (in $K$-theory). In fact this statement is equivalent to Atiyah's theorem on covering spaces \cite{Ati} (more precisely, to the versions of L\"uck and Schick in \cite{Lu,Sc}). See also remark \ref{Atiyah_l1} below for a further generalization.

Let $M$ be a closed manifold with  $\pi_1(M)=\G$. For an elliptic operator $D$ acting on the sections of a bundle $S\longrightarrow M$, let $\tilde D$ be the lift on the universal covering.   Recall that the index class $$\Ind \tilde D\in K_*(C^{*}\G)$$ is the $KK$-product $\left[\cV\right]\otimes_{C(M)}[D]$, where $\cV=\Mt\times_\G C^{*}\G$ is the so called Mishchenko bundle. Indeed, under the identification of the sections of $\mathcal{V}\otimes S$ with the $\Gamma$-invariant sections of the lifted bundle $\pi^*(\mathcal{V}\otimes S)=(C^*\G\otimes \pi^*(S))\times \tilde{M}$, one easily sees that $\tilde{D}$ is a connection and a $C^*\Gamma$-Fredholm operator. Its index is the Connes--Moscovici index class given by the idempotent $\begin{pmatrix}0&0\\0&I\end{pmatrix}+R$ for
$$R:=\left(\begin{array}{cc}S_0^2 & S_0(I+S_0)\tilde{Q} \\S_1 \tilde{D} & -S_1^2\end{array}\right)$$
where $\tilde{Q}$ is any almost local parametrix of $\tilde{D}$ and $S_0=I-\tilde{Q}\tilde{D}$, $S_1=I-\tilde{D}\tilde{Q}$ are smoothing.

Let now $B$ a $\rm{II}_1$-factor. A morphism $u \,\colon \, \G\to \cU(B)$ is a representation of $\G$ and therefore extends to a $*$-morphism $\bar u \,\colon \,C^{*}\G\to B$. 
Denote by $E_u$ the bundle $ E_u:=\tilde M\times_u B$ over $M$ with fiber $B$, let $\mathcal{E}_u$ denote the corresponding (finitely generated projective right) $(C(M)\otimes B)$-module of sections of $ E_u$, and $[\mathcal{E}_u]$ its class in $K_0(C(M)\otimes B)$. 

The von Neumann index of $D$ (in $K_0(B)$) is
\begin{equation}
\label{indB}
\Ind \tilde D_B=\bar u(\Ind \tilde D)=\left[ E_u\right]\otimes_{C(M)} [D]
\end{equation}

\begin{lem} 
  \label{Aty}
  $\mathcal E_u$ is the trivial element in $K_0(C(M)\otimes B)$.
\end{lem}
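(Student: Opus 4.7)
The plan is to verify the claim by testing against $K$-homology: both $[\mathcal E_u]$ and the class $[\underline B]$ of the trivial $B$-bundle $M\times B$ will be shown to pair with every elliptic $[D]\in K_0(M)$ to the same element of $K_0(B)$.

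First I would use the reduction already recorded in the excerpt. Since $C(M)$ belongs to the bootstrap category, $K_1(B)=0$, and $K_0(B)\simeq \R$ is torsion free, the K\"unneth formula gives a natural isomorphism
\begin{equation*}
K_0(C(M)\otimes B)\ \simeq\ K^0(M)\otimes K_0(B)\ =\ K^0(M)\otimes \R.
\end{equation*}
Via the Chern character this is a finite dimensional $\R$-vector space, and the classical fact recalled in the introduction --- that $K$-homology separates points of $K^*(M;\R)$ --- says that the pairing with $K_0(M)$ via the Kasparov product is non-degenerate. So it is enough to prove that for every $[D]\in K_0(M)$,
\begin{equation*}
[\mathcal E_u]\otimes_{C(M)}[D]\ =\ [\underline B]\otimes_{C(M)}[D] \qquad \text{in}\ K_0(B)\simeq \R.
\end{equation*}

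The right hand side is immediate: twisting $D$ by the trivial $B$-bundle produces $D\otimes \id_B$, whose $B$-valued index is $\Ind(D)\cdot [1_B]$, sent by the canonical trace $\tau_B$ to $\Ind(D)\in \Z$. For the left hand side, formula \eqref{indB} gives $[\mathcal E_u]\otimes_{C(M)}[D]=\bar u(\Ind \tilde D)$, whose image under $\tau_B$ is $(\tau_B\circ\bar u)(\Ind \tilde D)$; the composition $\tau_B\circ\bar u\colon C^*\Gamma\to \C$ is a normalized positive trace.

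The key step, and the main technical input, is Atiyah's $L^2$-index theorem in the form extended by L\"uck and Schick to an arbitrary normalized trace $\tau$ on $C^*\Gamma$: one has $\tau(\Ind \tilde D)=\Ind(D)$. Applied to $\tau=\tau_B\circ\bar u$, this equates the two pairings and completes the proof. The only point to verify is that the version of Atiyah--L\"uck--Schick invoked accepts a trace arising from a unitary representation of $\Gamma$ into a $\mathrm{II}_1$-factor, which is precisely the setting covered by \cite{Lu,Sc}.
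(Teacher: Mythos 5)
Your proof is correct, but it takes the \emph{opposite} route from the one the paper adopts in the body of the text. The paper's own proof is one line: the Chern character $\Ch_\tau\colon K_0(C(M)\otimes B)\to H^{ev}(M;\R)$ is an isomorphism (this is the content of diagram \eqref{chDiag} in Section \ref{CW}, using that $\ch\otimes\tau$ is an isomorphism when $B$ is a ${\rm II}_1$-factor), and $\Ch_\tau(\mathcal E_u)=\Ch_\tau(\underline B)=1$ because the flat connection on $E_u$ has zero curvature. No index theorem is needed. Your argument instead feeds in the Atiyah--L\"uck--Schick $L^2$-index theorem, plus the K\"unneth isomorphism and the non-degeneracy of the pairing with $K$-homology, to arrive at the same conclusion. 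The paper is aware of this alternative: it is exactly the ``conversely'' direction in the Remark immediately following Lemma \ref{Aty}, where the authors note that the lemma both implies and is implied by the $L^2$-index theorem. The trade-off is clear. The Chern--Weil route is self-contained (relying only on the basic differential-geometric fact that flat curvature kills the higher Chern forms) and, as the authors point out in the introduction, it even yields a \emph{new proof} of Atiyah's $L^2$-index theorem and its generalizations. Your route inverts this logic: it is a perfectly valid deduction, but it treats the $L^2$-index theorem as a black box, so it cannot be used to reprove it, and it adds the auxiliary steps (K\"unneth, non-degeneracy of $K^0(M)\otimes\R\to\Hom(K_0(M),\R)$, which needs $K^0(M)$ finitely generated) that the Chern--Weil argument bypasses.
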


\begin{proof}
 The Chern character $\ch\,\colon\, K_0(C(M)\otimes B)\longrightarrow  H^{ev}(M, \R)$ is an isomorphism, as recalled in Section  \ref{CW}. Then the statement follows since $E_u$ is a flat bundle. 
\end{proof}

\begin{rem}
\begin{itemize} 
\item Atiyah's $L^2$-index theorem (L\"uck's and Schick version \cite{Lu,Sc}) follows immediately from Lemma \ref{Aty}. In fact, given any trace $\tau$ on $C^*(\Gamma)$, we may construct a ${\rm II}_1$-factor $B$ with a trace-preserving embedding $C^*(\Gamma)\to B$. It immediately follows from Lemma \ref{Aty} and formula  (\ref{indB}) that $\Ind \tilde D_B=\Ind D$. 
\item Conversely, Lemma \ref{Aty} follows from Atiyah's $L^2$-index theorem (L\"uck's and Schick version \cite{Lu,Sc}). Indeed, for any trace $\tau$ on $B$,
$\tau(\Ind_B \tilde{D})=\tau (\Ind D \cdot [1])$, and the result follows from the fact that the pairing map
$K^0(C(M))\otimes \R\longrightarrow \operatorname{Hom}(K^0(M),\mathbb{R})$, $D\longmapsto \langle \cdot, D\rangle$ is surjective since $K^0(M)$ is finitely generated.
\item It would be interesting to find a $K$-theoretic proof of Lemma \ref{Aty} with the idea:
 find a bigger ${\rm II}_1$-factor $B_1$, \emph{i.e.} a ${\rm II}_1$-factor containing $B$, such that the bundle $E_u\otimes _{C(M)\otimes B}C(M)\otimes B_1$ becomes explicitly trivial.
\end{itemize}
 \end{rem}

\begin{rem} (Atiyah's theorem for continuous traces on $\ell^1(\G)$).\label{Atiyah_l1} The symmetric index of an elliptic (pseudo)-differential operator $D$ is well defined as an element $\ind_{\ell^1(\G)}(\tilde D)$ of $K_0(\ell^1(\G))$ (see \cite{CM}). 

A continuous linear form on $\ell^1(\G)$  is of the form $a\mapsto \sum_{g\in \Gamma} a_g f(g)$ for some $f\in \ell^\infty(\G)$. The trace property holds if and only if $f$ is constant on conjugacy classes. Let $\tau$ be a trace associated with $f\in \ell^\infty(\G)$.

Now $\tau(\ind_{\ell^1(\G)}(\tilde D))=T_\t(R)$ where $T_\tau$ is defined as follows: let first $p:\tilde M\to M$ be the covering map. Note that $\Gamma $ acts freely and properly on $\tilde M$ and $M=\G \backslash \tilde M$ \emph{i.e.} the fibers of $p$ are the orbits of $\Gamma$.

Let $G=\Gamma\backslash \tilde M^2$ - where $\Gamma $ acts diagonally on $\tilde M\times \tilde M$. Then $G$ is a Lie groupoid and $C_c^\infty(G)$ is the algebra of $\G$-invariant smooth kernels on $\tilde M$ with a bounded support condition. Note that $R\in M_n(C_c^\infty(G))$.

Let $\varphi\in C_c^\infty(G)$. Let $x\in \tilde M$. Let $\varphi_x:\G\to \C$ be defined by $\varphi_x(g)=\varphi(gx,x)$. For $h\in \G$ and $g\in \G$, we have $\varphi(ghx,hx)=\varphi(h^{-1}ghx,x)$ and therefore $\tau(\varphi_x)=\tau(\varphi_{hx})$. We then put $T_\t(\varphi)=\int _M\tau(\varphi_x)\,dx$.

Now, if $\tilde D$ and $\tilde Q$ are taken local enough, we have $R(x,gx)=0$ for all $x\in \tilde M$ and $g\ne 1$, and therefore $$\tau ({\rm ind}_{\ell^1(\G)}(\tilde D))=\t(1)\,\varepsilon({\rm ind}_{\ell^1(\G)}(\tilde D))=\tau(1)\, \ind (D)$$ where $\varepsilon:\G\to \C$ is the trivial representation of $\G$.
\end{rem}


\section{$[\a]_{new}$ in $K^1(M;\R/\Z)$ associated with a flat bundle $\a\colon \G\to U(n)$}

\label{section5}

The triviality property of Lemma \ref{Aty} is crucial in what will be our definition of the element  in $ K^1(M;\R/\Z)$ associated to flat a vector bundle.

Let $M=\G\backslash \tilde M$ be a closed manifold with $\pi_1(M)=\G$ and universal cover $\tilde M$. A flat bundle over $M$ is just given by a homomorphism from $\G$ to the associated structure group.

Let $B$ be a (finite) von Neumann algebra. A flat $C(M)\otimes B$ module with fiber $B^n$ is given by a homomorphism $u$ from $\G$ to the group $U_n(B)$ of unitaries of $M_n(B)$. The associated bundle is $\tilde M\times_\Gamma B^n$; the corresponding finitely generated projective module over $C(M)\otimes B$ is the set of continuous maps $f:\tilde M\to B^n$ such that $f(g x)=u(g)f(x)$ for all $g\in \G$ and $x\in \tilde M$. A flat isomorphism of flat bundles associated with $u_+:\G\to U_n(B)$ and $u_-:\Gamma \to U_n(B)$ is given by a unitary $v\in U_n(B)$ intertwining the morphisms $u_+$ and $u_-$.

We first observe the following 

\begin{prop}
   \label{reduct} 
Let $M$ be a compact manifold and $B$ be a finite factor. Let $\cE $ be a flat $C(M)\otimes B$ module with fiber $B$  (given by a homomorphism $u:\pi_1(M)\to U(B)$). Then there exists $\ell \in \mathbb{N}^*$ such that  $ \cE ^\ell\simeq (C(M)\otimes B)^\ell$.
\end{prop}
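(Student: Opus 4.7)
The plan is to combine the $K$-theoretic triviality of Lemma \ref{Aty} with the stabilisation result of Lemma \ref{full2}. Setting $A:=C(M)\otimes B$, we regard $\mathcal E$ as a finitely generated projective right $A$-module and compare it with the free module of rank one. Lemma \ref{Aty} gives $[\mathcal E]=[A]$ in $K_0(A)$, and Lemma \ref{full2} promotes such an equality between two \emph{full} modules to an honest isomorphism $\mathcal E^\ell\simeq A^\ell$ after a suitable stabilisation.

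The one nontrivial verification is that $\mathcal E$ is full over $A$ (the module $A$ being tautologically full). The key input is that a flat bundle is locally trivial: on any simply-connected open $U\subset M$, parallel transport provides an isomorphism $\phi_U\colon\mathcal E|_U \to C(U)\otimes B$ of $C(U)\otimes B$-modules. Choose a finite cover $\{U_i\}$ of $M$ by such opens together with continuous functions $\psi_i\geq 0$ with $\supp(\psi_i)\subset U_i$ and $\sum_i\psi_i^2=1$. Then the elements $s_i:=\psi_i\,\phi_{U_i}^{-1}(1)\in\mathcal E$ and the $A$-linear forms $\ell_i(t):=\psi_i\,\phi_{U_i}(t|_{U_i})\in A$ (both extended by zero outside $U_i$) satisfy $\sum_i\ell_i(s_i)=\sum_i\psi_i^2=1$, so the two-sided ideal generated by $\{\ell(x)\colon x\in\mathcal E,\ \ell\in\mathcal L(\mathcal E,A)\}$ contains the unit and therefore equals $A$.

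Combining these two ingredients, Lemma \ref{full2} supplies an integer $\ell\in\mathbb N^*$ with $\mathcal E^\ell\simeq A^\ell=(C(M)\otimes B)^\ell$. The main technical point in this plan is the fullness verification; everything else is a direct application of the preceding two lemmas. A minor subtlety is that Lemma \ref{Aty} is stated for a ${\rm II}_1$-factor whereas the proposition allows $B$ to be any finite factor: in the type $\mathrm I_n$ case the relevant replacement is the classical Chern--Weil fact that a flat vector bundle is torsion in $\widetilde K^0(M)$, producing some $k$ with $k[\mathcal E]=k[A]$ in $K_0(A)$, after which Lemma \ref{full2} applied to the pair $(\mathcal E^k,A^k)$ still yields the conclusion.
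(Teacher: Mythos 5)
Your proof is correct and follows exactly the route the paper intends: combine Lemma~\ref{Aty} (so $[\mathcal E]=[C(M)\otimes B]$ in $K_0(C(M)\otimes B)$) with Lemma~\ref{full2} to upgrade the $K_0$-equality to an isomorphism after stabilisation. The paper's proof consists of a single sentence asserting exactly this, so the two additional points you spell out---the partition-of-unity verification that $\mathcal E$ is full (which is indeed the only nontrivial hypothesis of Lemma~\ref{full2}), and the replacement of Lemma~\ref{Aty} by the Chern--Weil torsion argument when $B$ is a $\mathrm I_n$ factor---are not alternative ideas but exactly the details the paper leaves implicit, and they check out.
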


\begin{proof}
Follows immediately from Lemma \ref{Aty} using Lemma \ref{full2}.
 \end{proof}

A flat Hermitian vector bundle $E_\a\to M$ is given by a homomorphism $\a\colon \G\to U_n$.

In the following we denote with $E^+=C(M, E_\a)$ the corresponding (flat) $C(M)$-module of sections and $E^-=C(M)^n$  the module of sections of the trivial bundle of rank $n$.

\begin{prop} 
   \label{prop.RmodZ}
 Let $E_\a$ be a flat unitary vector bundle as above.
  \begin{itemize}
   \item[a)] There exists a finite factor $B$, a flat $C(M)\otimes B$ module  $\cE $ with fiber $B$  and an isomorphism of flat  
   bundles $v\colon E^-\otimes_{C(M)} \cE \longrightarrow E^+\otimes_{C(M)} \cE $.
     \end{itemize}
By Proposition \ref{reduct}, up to tensoring $B$ by  $M_n$, we  have an isomorphism $\phi\colon C(M)\otimes B\to \cE$. 
   Let $w_v$ be the isomorphism  
   \begin{equation}
\label{iso}
   w_v:=(1_{E^-}\otimes \phi)^{-1}\circ v^{-1}\circ (1_{E^+}\otimes \phi)\colon E^+\otimes B\to E^-\otimes B \ . 
\end{equation}
   \begin{itemize}
  \item[b)] 
   The class of $(E^+,E^-, w_v)$  in $K^1(M, \R/\Z)$ is independent of all choices $B$, $\cE $, $v$, $\phi$.
  \end{itemize}
\end{prop}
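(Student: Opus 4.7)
For part (a), I would make the construction explicit via the frame bundle of $E_\alpha$. Take $B$ to be a ${\rm II}_1$-factor containing $L^\infty(U_n)\rtimes_\alpha \G$ trace-preservingly (e.g., by tensoring with the hyperfinite factor), and let $u:\G\to U(B)$ send $g$ to the canonical unitary $u_g$ implementing the $\alpha$-twisted $\G$-action on $L^\infty(U_n)$. The associated flat $C(M)\otimes B$-module is $\cE:=\tilde M\times_u B$. For the flat isomorphism, I would use the tautological matrix $V\in M_n(L^\infty(U_n))\subset M_n(B)$ defined by $V(x)=x$ for $x\in U_n\subset M_n(\C)$. Combining the covariance relation $u_g f u_g^{-1}=f(\alpha(g)^{-1}\cdot)$ with the identity $V(\alpha(g)^{-1}x)=\alpha(g)^{-1}V(x)$, a direct matrix computation yields
\begin{equation*}
V\cdot (1_n\otimes u_g) = (\alpha(g)\otimes u_g)\cdot V \qquad\text{in } M_n(B),
\end{equation*}
which is precisely the intertwining condition making $V$ descend to a flat bundle isomorphism $E^-\otimes_{C(M)}\cE \to E^+\otimes_{C(M)}\cE$; set $v:=V^{-1}$.

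For part (b), I would realize the class inside $K_0(\varphi)\simeq K^1(M,\R/\Z)$ with $\varphi:C(M)\hookrightarrow C(M)\otimes B$, and exploit the triple description of Section \ref{relKth}, which makes $[(E^+,E^-,w_v)]$ depend on $w_v$ only up to homotopy of isomorphisms and modulo trivial elements. Invariance is verified in four steps of increasing difficulty. \emph{Change of $\phi$}: two trivializations of $\cE\simeq C(M)\otimes B$ differ by conjugation by a unitary $\psi\in U(C(M)\otimes B)$; a stabilization-plus-rotation argument in $M_2$, combined with the relative $K$-theory identity $[(E^+,E^-,u)]+[(E^-,E^+,u^{-1})]=0$, shows the class is preserved. \emph{Change of $v$}: two flat isomorphisms differ by a flat automorphism of $E^-\otimes\cE$, i.e.\ a unitary in the commutant of $u(\G)$ inside $M_n(B)$, which can be deformed to the identity within the path-connected unitary group of this finite von Neumann algebra.

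The remaining \emph{change of $(B,\cE)$} is the most delicate point. Given a second datum $(B',\cE',v',\phi')$, I would embed both $B,B'$ trace-preservingly into a common ${\rm II}_1$-factor $\widetilde B$ (say the tensor product $B\otimes B'$) and invoke the canonical K\"unneth-type isomorphism $K_*(C(M)\otimes B)\simeq K_*(C(M)\otimes \widetilde B)$ together with its relative analogue, in order to compare the two classes inside a single ambient. Inside $\widetilde B$, both $\cE\otimes_B\widetilde B$ and $\cE'\otimes_{B'}\widetilde B$ are flat $\widetilde B$-bundles, which by Proposition \ref{reduct} become stably trivial; taking their direct sum together with a rotation homotopy linking $v$ and $v'$ then produces the sought homotopy of relative $K$-theory triples. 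The main obstacle I anticipate is the rigorous execution of this last rotation: the intermediate isomorphisms are no longer flat, and one must verify that they nevertheless represent a continuous path of cycles in $K_0(\varphi)$. This is exactly where the triviality of flat $\widetilde B$-bundles is crucial: it absorbs the non-flat part of the deformation into trivial cycles, allowing the whole homotopy to descend to a well-defined equality in $K^1(M,\R/\Z)$.
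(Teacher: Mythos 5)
Your part~(a) follows essentially the same frame-bundle idea as the paper: take the tautological unitary $V\in M_n(L^\infty(\cdot))$ and verify the intertwining relation $V(1\otimes u_g)=(\a(g)\otimes u_g)V$. The only difference is that the paper works directly with $L^\infty(K)\rtimes \G_1$, where $\G_1=\a(\G)$ and $K=\overline{\G_1}$, which is already a ${\rm II}_1$-factor (the left translation action of a dense subgroup is free and ergodic); you instead take $L^\infty(U_n)\rtimes_\a\G$ and embed into a factor afterwards. Both work; the paper's choice just avoids the extra embedding step.

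The real divergence, and the place where I think your argument has a gap, is part~(b). You separate the verification into three cases (change of $\phi$, of $v$, of $(B,\cE)$). Your second case is essentially the paper's final argument: a flat automorphism lives in $M_n(Q)$ with $Q=B\cap u(\G)'$, a finite von Neumann algebra, whose unitary group is connected, so the intertwiner can be deformed. But your third case --- different $(B,\cE)$ --- is the genuinely hard one, and your proposal (embed $B,B'$ in $\widetilde B=B\overline\otimes B'$, then take a \emph{direct sum} of the two cycles and apply a rotation) does not obviously close. After the K\"unneth identification, $w_v\otimes 1$ and $1\otimes w_{v'}$ are two isomorphisms $E^+\otimes\widetilde B\to E^-\otimes\widetilde B$, but $v_1$ and $v_2$ intertwine \emph{different} flat structures on $\widetilde B$-bundles ($u_1\otimes 1$ versus $1\otimes u_2$), so there is no common commutant in which to run a connectedness argument, and the intermediate stages of a naive rotation homotopy are indeed, as you anticipate, not cycles of any controlled type. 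You flag this as the ``main obstacle'' and suggest that the $K$-theoretic triviality of flat $\widetilde B$-bundles absorbs the problem, but you do not show how.

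The paper's resolution, which your proposal misses, is to take the \emph{tensor product of the two flat structures}: set $u=u_1\otimes u_2$ and $\psi=\psi_1\otimes\psi_2$, so that $V_1:=v_1\otimes 1_{B_2}$ and $V_2:=\beta(v_2)$ are \emph{both} intertwiners for the single flat structure $g\mapsto u_1(g)\otimes u_2(g)$ on $M_n(B_1\overline\otimes B_2)$; a direct check shows $w_{V_1}=w_{v_1}\otimes 1$ and $w_{V_2}=\beta(w_{v_2})$, recovering the two original classes via the canonical K\"unneth identifications. Then $W:=V_2^{-1}V_1$ lies in $M_n(Q)$ with $Q=(B_1\overline\otimes B_2)\cap\{u_1(g)\otimes u_2(g)\}'$, and connectedness of $U(M_n(Q))$ gives a path $W_t$, hence a path $V_2W_t$ of \emph{flat} intertwiners (not merely isomorphisms), hence a continuous path of relative-$K$-theory cycles joining the two classes. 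In particular, at no point does the paper need to deal with non-flat intermediate isomorphisms; the concern you raise is exactly the symptom of not having replaced your ``direct sum'' by a ``tensor product of flat structures''. Your step~1 (change of $\phi$) also appears as an independent claim in your write-up, but in the paper it is absorbed into this same argument and need not be handled separately; if treated in isolation as a conjugation $w\mapsto (1_{E^-}\otimes m_\psi)^{-1}\,w\,(1_{E^+}\otimes m_\psi)$ with a unitary $\psi\in C(M)\otimes B$, the Whitehead bookkeeping reduces it to showing $[(E^+,E^+,1_{E^+}\otimes m_\psi)]=[(E^-,E^-,1_{E^-}\otimes m_\psi)]$, which is not immediate since $[E^+]\ne[E^-]$ in $K_0(C(M))$ in general. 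So both steps~1 and~3 need repair, and both are handled at once by the tensor-product trick.
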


\begin{proof}
\begin{itemize}
\item[a)] Given a finite factor $B$ and a flat $C(M)\otimes B$ module  $\cE $ with fiber $B$ associated with a morphism $u:\G\to U(B)$, the flat bundle $E^-\otimes _{C(M)}\cE$ corresponds to the morphism $g\mapsto 1\otimes u(g)\in U_n(B)\subset M_n\otimes B$ and the flat bundle $E^+\otimes _{C(M)}\cE$ corresponds to the morphism $g\mapsto \alpha(g)\otimes u(g)$.

We therefore just have to construct a finite  factor $B$ with a morphism $u:\G\to U(B)$ and an element $v\in U_n(B)$,  such that, for all $g\in \G$, we have $v(1\otimes u(g))=(\a(g)\otimes u(g))v$.

Let $\G_1=\a(\G)$ be the image of $\G$ in $U_n$ (with the discrete topology) and $K=\overline{\G_1}$ its closure in $U_n$. Set $B_1=L^\infty(K)\rtimes \G_1$, where $\G_1$ acts on $K$ by left translation. Denote by $u:\G_1\to L^\infty(K)\rtimes \G_1$ the canonical inclusion; the inclusion map $v:K\to U_n\subset M_n$ is an element of $M_n(L^\infty(K))\subset M_n(B_1)$; by definition $(1\otimes u_g)v(1\otimes u_g)^{-1}$ is the function $x\mapsto v(g^{-1}x)=\alpha(g)^{-1}v(x)$, \emph{i.e.} the element $(\a(g)^{-1}\otimes 1)v$ in $M_n\otimes L^\infty(K)\subset M_n(B_1)$. 

The desired equality $v(1\otimes u(g))=(\a(g)\otimes u(g))v$ follows.

By density of $\G_1$ (discrete) in $K$ (compact), we have that $B_1$ is a finite factor.

%

\item[b)] The choice of $\cE$ is equivalent to the choice of a morphism $u:\G\to U(B)$; the choice of $\varphi$ is equivalent to a  continuous map $\psi:\tilde M\to U(B)$ such that $\psi (g x)=u(g)\psi (x)$ for all $g\in G$ and $x\in \tilde M$. The element $w_v^{-1}$ is then the map $x\mapsto (1_{M_n}\otimes \psi (x))^{-1}v(1_{M_n}\otimes \psi (x))$ from $\tilde M$ to $U_n(B)$ which satisfies $\psi (gx)=(\alpha (g)\otimes 1)\psi (x)$ and thus defines an isomorphism of bundles $E^-\otimes B\to E^+\otimes B$. 

Assume that we are given $(B_j,u_j,v_j,\psi_j)$ with $j=1,2$. Put $B=B_1\overline\otimes B_2$ (von Neumann tensor product of course),  and define $u=u_1\otimes u_2:g\mapsto u_1(g)\otimes u_2(g)$ and $\psi=\psi_1\otimes \psi_2:x\mapsto \psi_1(x)\otimes \psi_2(x)$; put also $V_1=v_1\otimes 1_{B_2}$ and $V_2=\beta(v_2)$, where $\beta:M_n\otimes B_2\to M_n\otimes B_1\otimes B_2$ is the obvious inclusion.

We find in this way two new families $(B,u,V_1,\psi)$ and $(B,u,V_2,\psi)$ with the desired properties.

For $x\in \tilde M$, we have 
$w_{V_1}(x)=(1_{M_n}\otimes \psi (x))^{-1}V_1^{-1}(1_{M_n} \otimes \psi (x))=w_{v_1}(x)\otimes B_2$; 
it follows that the elements of $K_1(M,\R/\Z)$ corresponding to $(B_1,u_1,v_1,\psi_1)$ and $(B,u,V_1,\psi)$ coincide. Also $w_{V_2}(x)=\beta (w_{v_2}(x)$, and thus the elements of $K_1(M,\R/\Z)$ corresponding to $(B_2,u_2,v_2,\psi_2)$ and $(B,u,V_2,\psi)$ coincide.

Finally, write $V_1=V_2W$ where $W$ is a unitary in $M_n(Q)$ where $Q=B\cap \{u(g);\ g\in \G\}'$. By connectedness of the unitary group of a von Neumann algebra, it follows that  the elements of $K_1(M,\R/\Z)$ corresponding to $(B,u,V_1,\psi)$ and $(B,u,V_2,\psi)$ coincide.
\qedhere
\end{itemize}
\end{proof}

\begin{dfn}
    \label{alpha.new}
  Given $\a\colon \G\to U_n$, $E^+$ and $E^-$ as above. For any $\mathcal E, \phi, v$ as in Proposition \ref{prop.RmodZ}. We define 
$$
[\a]_{new}:=[(E^+, E^-, w_v)] \in K_1(C(M), \R/\Z)\ .
$$
\end{dfn}

\begin{theorem}
     \label{element.RmodZ}
      Let $E_\a$ be a flat unitary vector bundle as above.
Consider any other given data of: a finite factor $B$, a flat $B$-bundle $\mathcal F\to M$ along with an isomorphism $\theta\colon E^-\otimes \mathcal F\to    E^+\otimes \mathcal F$ (not required to preserve   connections),  and a  trivialization  $\psi\colon \mathcal F\to C(M)\otimes B$, and denote with $\CS(\theta^*\nabla_{E^+\otimes \mathcal F};\nabla_{E^-\otimes \mathcal F})$ the Chern--Simons transgression form (formula \ref{CSdef}). Let $w_\theta= (1_{E^+}\otimes \psi)^{-1}\circ \theta\circ (1_{E^-}\otimes \psi):E^-\otimes B\to E^+\otimes B $. We have
$$
   [(E^+, E^-, w_\theta^{-1})] +j_*\left(\Ch{}_\t ^{-1}(\CS(\theta^*\nabla_{E^-\otimes \mathcal F};\nabla_{E^+\otimes \mathcal F}))\right)=[\alpha]_{new}\in K^1(M, \R/\Z).
$$
\end{theorem}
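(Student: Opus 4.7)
The plan is to reduce the comparison to a single finite factor and to recognize the difference between $[(E^+,E^-,w_\theta^{-1})]$ and $[\alpha]_{new}=[(E^+,E^-,w_v)]$ as the image under the Bockstein connecting map of a $K_1$-class whose Chern character computes the Chern--Simons form via Proposition \ref{Ch=CS}.

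First, given the data $(B',\mathcal E,v,\phi)$ behind $[\alpha]_{new}$ (with $v$ a \emph{flat} isomorphism) and the data $(B,\mathcal F,\theta,\psi)$ of the theorem, I would replace both by a common setup on $\tilde B=B'\,\overline\otimes\,B$, with combined flat bundle $\mathcal G=\mathcal E\otimes\mathcal F$, trivialization $\tilde\varphi=\phi\otimes\psi$, and isomorphisms $V:=v\otimes 1_{\mathcal F}$, $\Theta:=1_{\mathcal E}\otimes\theta$ of $E^\pm\otimes\mathcal G$. The isomorphism $V$ is still flat, and as in the proof of Proposition \ref{prop.RmodZ}(b) the relevant $\R/\Z$-$K$-theory classes are unchanged under this tensoring. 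The Chern--Simons transgression form is likewise preserved, since tensoring with the identity of a flat bundle leaves the pullback of connections invariant and the trace of $\tilde B$ restricts compatibly to that of $B$.

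Next, in this common setup $w_V$ and $w_\Theta^{-1}$ are both isomorphisms $E^+\otimes\tilde B\to E^-\otimes\tilde B$, so their composition $w_V\circ w_\Theta$ is an automorphism of $E^-\otimes\tilde B$, which via $\tilde\varphi$ corresponds to the automorphism $V^{-1}\circ\Theta$ of the flat bundle $E^-\otimes\mathcal G$. Using the explicit idempotent description of relative $K$-theory recalled in Section \ref{relKth} together with the split exact sequence \eqref{split}, the difference of the two triples
\begin{equation*}
[(E^+,E^-,w_V)]-[(E^+,E^-,w_\Theta^{-1})]\in K^1(M,\R/\Z)
\end{equation*}
is exactly the image under the Bockstein $j_*:K_1(C(M),\R)=K_1(C(M)\otimes\tilde B)\to K^1(M,\R/\Z)$ of the $K_1$-class of $V^{-1}\circ\Theta$.

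Finally, $E^-\otimes\mathcal G$ is a flat Hilbert $\tilde B$-module bundle and $V^{-1}\circ\Theta$ is an automorphism of it. Proposition \ref{Ch=CS} gives
\begin{equation*}
\operatorname{Ch}_\tau(V^{-1}\circ\Theta)=\operatorname{CS}_\tau\bigl((V^{-1}\circ\Theta)^*\nabla^{E^-\otimes\mathcal G},\nabla^{E^-\otimes\mathcal G}\bigr).
\end{equation*}
Because $V$ is flat, $(V^{-1})^*\nabla^{E^-\otimes\mathcal G}=\nabla^{E^+\otimes\mathcal G}$, so the right hand side reduces to $\operatorname{CS}_\tau(\Theta^*\nabla^{E^+\otimes\mathcal G},\nabla^{E^-\otimes\mathcal G})$, precisely the transgression form in the statement. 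Applying the inverse of the Chern character isomorphism $K_1(C(M),\R)\simeq H^{odd}(M;\R)$ and rearranging yields the claim. The main obstacle is the identification in the previous paragraph: one has to chase through the explicit matrix $e\in M_{2n}(Z_\varphi)$ of Section \ref{relKth} to verify that the difference of two triples $(E^+,E^-,w_V)$ and $(E^+,E^-,w_\Theta^{-1})$ sharing the same modules really equals the Bockstein image of the unitary $w_V\circ w_\Theta$; that is, the compatibility between the triple picture of relative $K$-theory and the connecting homomorphism $K_1(C(M)\otimes\tilde B)\to K_0(C_{C(M)\hookrightarrow C(M)\otimes\tilde B})$ in the explicit model used in the paper.
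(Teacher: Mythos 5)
Your proposal is correct and takes essentially the same route as the paper's own proof: tensor with $\cE$ from Proposition \ref{prop.RmodZ} so that a flat isomorphism coexists with $\theta$, recognize the difference of the two relative triples as the Bockstein image of the automorphism $\ell = v\circ\theta$ of $E^-\otimes\mathcal F$, and then identify $\Ch_\tau(\ell)$ with the Chern--Simons form via Proposition \ref{Ch=CS} together with the flatness of $v$. The only differences are stylistic: you spell out the tensoring reduction (and the invariance of the CS form under it) more explicitly than the paper does, and you flag the relative-$K$-theory identity $[(E^+,E^-,w_v)]-[(E^+,E^-,w_\theta^{-1})]=j_*[\ell]$ as requiring verification, whereas the paper simply asserts it.
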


\begin{proof}
Up to tensoring $\cF$ by $\cE$ given by proposition \ref{prop.RmodZ}, we may assume that there is a flat connexion preserving isomorphism $v\colon E^+\otimes \mathcal F\to    E^-\otimes \mathcal F$.

Let now $B$ and $\mathcal F\to M$ be fixed, and $\theta\colon E^-\otimes \mathcal F\to E^+\otimes \mathcal F$ and $v\colon E^+\otimes \mathcal F\to    E^-\otimes \mathcal F$ two isomorphisms, such that $v$ is connexion preserving. 

Put $\ell=v\circ \theta$; it is an automorphism of $E^-\otimes \mathcal F$. As $v^*(\nabla_{E^-\otimes \mathcal F})=\nabla_{E^+\otimes \mathcal F}$, we find (using prop. \ref{Ch=CS})
$$\CS(\theta^*\nabla_{E^+\otimes \mathcal F};\nabla_{E^-\otimes \mathcal F})=\CS(\ell^*\nabla_{E^-\otimes \mathcal F};\nabla_{E^-\otimes \mathcal F})=\Ch(\ell).$$

Moreover $[(E^+, E^-, w_{ v})]-[(E^+, E^-, w_{\theta}^{-1})]=[(E^-, E^-,  w_{v}\circ w_{\theta}]$ is the image by $j:K_1(C(M);\R)\to K_1(C(M);\R/\Z)$ of the class of the automorphism $w_{v}\circ w_{\theta}=(1_-\otimes \phi)^{-1}\circ \ell\circ (1_-\otimes \phi)$ of $E^-\otimes B$. This is the same class as the automorphism $\ell$ of $E^-\otimes \mathcal F$.
\end{proof}

The element $[\a]_{APS}$ is constructed in \cite[5]{APS3} by means of an isomorphism $\theta:E^-\otimes B\to E^+\otimes B$, where $B=M_n(\C)$ (for a suitable $n$): it is defined by $[\a]_{APS}=[(E^+, E^-, w_\theta^{-1})] +j_*\left(\Ch{}_\t ^{-1}(\CS(\theta^*\nabla_{E^-\otimes B};\nabla_{E^+\otimes B}))\right)$. We immediately find: 
 
\begin{cor} 
Let $\a\colon \G\to U(n)$.   The element $[\a]_{new}$ coincides with $[\a]_{APS}$. \hfill$\square$
\end{cor}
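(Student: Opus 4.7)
The plan is to observe that this corollary is an immediate specialization of Theorem~\ref{element.RmodZ}. The APS definition of $[\a]_{APS}$, recalled in the paragraph just above, consists in choosing $B=M_n(\C)$ for a suitable $n$ (using the fact that a flat unitary bundle defines a torsion class in the reduced $K^0$ of $M$, so $E_\a^{\oplus k}$ is trivial for some $k$), together with an isomorphism $\theta\colon E^-\otimes B\to E^+\otimes B$. The class $[\a]_{APS}$ is then, \emph{by definition}, exactly the left-hand side of the equality in Theorem~\ref{element.RmodZ}, namely
\[
[(E^+, E^-, w_\theta^{-1})] + j_*\!\left(\Ch{}_\t^{-1}\bigl(\CS(\theta^*\nabla_{E^-\otimes B};\nabla_{E^+\otimes B})\bigr)\right).
\]

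All that remains is to verify that this APS data fits the hypotheses of Theorem~\ref{element.RmodZ}. The algebra $B=M_n(\C)$ is a finite factor (of type~$\mathrm I_n$), so it is an admissible choice. Take $\mathcal F$ to be the trivial $B$-bundle $M\times M_n(\C)$, equipped with the identity trivialization $\psi=\id$; then the induced isomorphism $w_\theta = (1_{E^+}\otimes \psi)^{-1}\circ\theta\circ(1_{E^-}\otimes\psi)$ coincides with $\theta$ itself, and the connection $\nabla_{E^\pm\otimes\mathcal F}$ agrees with the connection $\nabla_{E^\pm\otimes B}$ appearing in the APS formula. Applying Theorem~\ref{element.RmodZ} to these choices identifies the APS expression with $[\a]_{new}$, yielding $[\a]_{new}=[\a]_{APS}$. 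No real obstacle arises here: the substantive work has already been carried out in the proof of Theorem~\ref{element.RmodZ} for an arbitrary finite factor $B$, precisely so that the comparison with the APS construction (which corresponds to the particular case $B=M_n(\C)$ with $\mathcal F$ trivial) becomes a tautology.
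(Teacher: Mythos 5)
Your proposal is correct and takes exactly the same route as the paper: the paper presents the corollary with no proof (just a $\square$), precisely because, as you observe, the APS expression recalled in the preceding paragraph is literally the left-hand side of Theorem~\ref{element.RmodZ} in the case $B=M_n(\C)$ (a finite factor of type $\mathrm I_n$), $\mathcal F$ the trivial bundle, and $\psi=\operatorname{id}$. Your elaboration spells out why these choices are admissible, which matches the intended (unwritten) argument.
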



 \section{The pairing with $[D]\in K_1(M)$ and the spectral flow description}\label{pairing-section}
 
Let $[\a]=[(E^+, E^-, w_v)] \in K_0(C_{i_0}\otimes C(M))$ of Section \ref{section5}, where $C_{i_0}$ is the mapping cone of the unital inclusion $i_0:\C\to B$ - here $B$ is a ${\rm II}_1$ factor. In order to stay within unital algebras, we represent $[\a]$ in the $K$-theory of the double cylinder $Z_{i_0}\otimes C(M)$, as we saw in \eqref{split}. It is the image in the summand $K_0(C_{i_0}\otimes C(M))$ of the $Z_{i_0}\otimes C(M)$-module
 \begin{equation*}
 F_{w_v}=\{ f\in C\left([0,1],E^+\otimes B \right); 
f(0)\in E^+\otimes 1 \hbox{ and }\ w_v f(1)\in E^-\otimes 1\}\ .
 \end{equation*}

The Kasparov product gives the natural pairing between the class $[\a]\in K_0(C_{i_0}\otimes C(M))$ and a $K$-homology class $[D]\in K_1(M)$. In this section we describe and interpret it. In other words, we interpret the map $K_1(M)=KK^1(C(M),\C)\to \R/\Z$ induced by $[\a]$ via the pairing
\begin{equation}\label{KKprod}
  KK^0(\C, C(M)\otimes Z_{i_0})\times KK^1(C(M),\C)\longrightarrow KK^1(\C, Z_{i_0})=K_1(Z_{i_0})=  \R/\Z\ .
\end{equation}

\subsection{Various notions of connections}
Let us start with a class $[D]\in K_1(M)$ represented by the unbounded Kasparov module $(H, D)$, where $D$ is a first order self-adjoint elliptic (classical) pseudodifferential operator acting on the sections of a Hermitian vector bundle $S\to M$, and $H=L^2(M,S)$. 

A particularly important case for $D$ in our discussion is the case of a Dirac type operator, \emph{i.e.} an operator constructed in the following way: \begin{itemize}
\item We assume that $M$ is endowed with a Riemannian metric, 
\item The bundle $S$ is a \emph{Clifford bundle} \emph{i.e.} it is endowed with a linear bundle map $\cliff:T^*M\to \End(S)$ such that for every $\xi\in T^*_xM$ then $\cliff(\xi)\in \End(S_x)$ is a skew-adjoint and satisfies $\cliff(\xi)^2=-\|\xi\|^21_{S_x}$; denote by  $c:S\otimes T^*M\to S$ the corresponding map.
\item The bundle $S$ is further endowed with  a \emph{metric Clifford connection} $\nabla_S$  in the sense of \cite[Def. 3.39]{BGV}.
\item The corresponding Dirac operator is given by $D(s)=c(\nabla_S (s))$ for any smooth section $s$ of $S$.

\end{itemize}

Let us distinguish different types of connections that arise in our discussion:

\begin{dfn} Let $A$ be a unital $C^*$-algebra $E$ a finitely generated projective module endowed with a compatible Hilbert $A$-module structure. Let $V\to M$ be a smooth bundle with fibers $E$. 
\begin{itemize}
\item If $D$ is a first order self-adjoint elliptic (classical) pseudodifferential operator acting on the sections of a hermitian vector bundle $S\to M$, a \emph{pseudodifferential connection} for $D$ is  a selfadjoint pseudodifferential $A$ operator $D_V$  (in the sense of \cite{MF}) acting on the sections of the bundle $V\otimes S$ whose principal symbol is $\sigma_{D_V}=1_V\otimes \sigma_D$.
\item If $D$ is Dirac type, then a \emph{Dirac type connection} for $D$ is an operator of the form $(1_V\otimes c)\circ \nabla_{V\otimes S}$ where  $\nabla_{V\otimes S}$ is a metric Clifford connection on the Clifford $A$-bundle ${V\otimes S}$.
\end{itemize}
\end{dfn}

Of course, if $D$ is Dirac type, then a Dirac type connection for $D$ is a particular case of a pseudodifferential connection.

\subsection{The pairing as an element of $K_1(Z_{i_0})$}
The product \eqref{KKprod} is computed by any choice of a $D$-connection $G$ in the sense of  \cite{CS, Ku} on the Hilbert $Z_{i_0}$-module $V= F_{w_v}\otimes_{C(M)\otimes Z_{i_0}} \left(L^{2}(M,S)\otimes Z_{i_0}\right)$. Note that a pseudodifferential connection for $D$ is a $D$-connection in the sense of \cite{CS, Ku} (the converse is of course not true).

We have
\begin{equation}\label{pairing}
    [\a]\otimes_{C(M)} [D]=[(V,  G)]\ .
\end{equation} 

We can take any $G$ as follows. It will be of the form $G=(G_t)$, \emph{i.e.} $G(f)(t)=G_t(f_t)$. Since the bundle $E^-=C^n\otimes M$ is trivial, we may set $G_1=w_v(1_{\C^n}\otimes D\otimes 1_B)w_v^{-1}$. To construct $G_0$, we may just take any selfadjoint pseudodifferential operator $D_{E^+}$ on $E^+\otimes S$ whose principal symbol is $1_{E^+}\otimes \sigma_D$; we may also assume that $D$ is local enough (in particular if $D$ is differential), so that there is a canonical choice for $D_{E^+}$, called the \emph{twisted tensor product} of $D$ with the flat bundle $E^+$.

Put then $G_0=D_{E^+}\otimes 1_B$. The operators $G_0$ and $G_1$ have the same principal symbol and therefore $G_1-G_0$ is bounded. We may then choose any strongly continuous bounded path $q_t$ of selfadjoint operators acting on $E^+\otimes S\otimes B$ such that $q_0=0$, $q_1=G_1-G_0$ and set $G_t=G_0+q_t$. In particular, we may define $G_t=(1-t)G_0+t G_1$.

\subsection{The pairing as an element of $\R/\Z$}

Let us first recall a few facts about index in the type ${\rm II}$ setting:

Let $B$ be a factor of type ${\rm II}_1$. Denote by $\tau $ the normalized trace on $B$. Let $N=B\overline \otimes \mathcal{L}(H)$ the associated factor of type ${\rm II}_\infty$ endowed with the trace  ${\rm Tr}_N=\tau\otimes {\rm Tr}$ where ${\rm Tr}$ is the canonical trace on $\mathcal{L}(H)$. Let $p=1\otimes p_0\in N$, where $p_0$ is a minimal projection in $\mathcal{L}(H)$ and identify $B$ with $pNp$.

Let $P_\pm \in N$ be two projections such that $P_+-P_-$ is in the ideal $\cK_N$ of generalized compact operators in $N$. Then $\ind(P_+,P_-)$ is the index of $q:E^+\to E^-$ where $E_\pm=P_\pm Np$ considered a a Hilbert $B$-module and $qx=P_-x$ for $x\in E^+$. Note that $q$ is $B$-Fredholm with quasi-inverse $x\mapsto P_+x$. If $P_+-P_-$ is in the domain of ${\rm Tr}_N$, then $\ind (P_+,P_-)={\rm Tr}_N(P_+-P_-)$.

Let $D_\pm$ be selfadjoint operators affiliated with $N$ such that $D_+-D_-\in N$ (which means that $D_+$ and $D_-$ have same domain and there exists $b\in N$, with $b=b^*$ and $D_+=D_-+b$). Note that for $\lambda_\pm$ not in the spectrum of $D_\pm$, we have $$(D_--\lambda_-)^{-1}-(D_+-\lambda_+)^{-1}=(D_+-\lambda_+ )^{-1}(b+\lambda_--\lambda_+)(D_--\lambda_-)^{-1},$$ therefore the resolvent of $D_+$ is in $\cK_N$ if and only if the resolvent of $D_-$ is. Assume this is the case. For $a\in \R$, let $\chi_a$ be defined by $$\chi_a(t)=1 \ \ \hbox{if}\ \ t\ge a\ \ \hbox{and}\ \  \chi_a(t)=0 \ \ \hbox{if}\ \ t< a.$$ Then for all $a\in \R$, we have $\chi_a(D_+)-\chi_a(D_-)\in \cK_N$ and we put $$\SF_a(D_+,D_-)=\ind(\chi_a(D_+),\chi_a(D_+)).$$ Note that for all $a,b\in \R$, we have (by additivity of the index) $$\SF_a(D_+,D_-)-\SF_b(D_+,D_-)={\rm Tr}_N((\chi_a-\chi_b)(D_+))-{\rm Tr}_N((\chi_a-\chi_b)(D_-)).$$

Let us now come to $(V,G)$ as defined above. We easily find:

\begin{prop}
The class $j(\SF_a(G_0,G_1))\in \R/\Z$ is independent of $a\in \R$. The class $[\a]\otimes_{C(M)} [D]$ in $\R/\Z$ is $j(\SF_a(G_0,G_1))$. 
\end{prop}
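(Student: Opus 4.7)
The plan is to prove the two statements in tandem. For the first (independence of $a$), I would directly invoke the spectral flow additivity identity stated just above the proposition. The point is that both $G_0=D_{E^+}\otimes 1_B$ and $G_1=w_v(1_{\C^n}\otimes D\otimes 1_B)w_v^{-1}$ are, up to a unitary conjugation inside $N$ (which preserves $\operatorname{Tr}_N$), of the form $D'\otimes 1_B$ where $D'$ is an ordinary selfadjoint elliptic (pseudo)differential operator on the closed manifold $M$. Such a $D'$ has compact resolvent and discrete spectrum, so $(\chi_a-\chi_b)(D')$ is of finite rank with integer trace. Since $\operatorname{Tr}_N=\tau\otimes\operatorname{Tr}$ and $\tau(1_B)=1$, one obtains
\begin{equation*}
\operatorname{Tr}_N\bigl((\chi_a-\chi_b)(G_i)\bigr)\;=\;\operatorname{Tr}\bigl((\chi_a-\chi_b)(D')\bigr)\;\in\;\Z, \qquad i=0,1,
\end{equation*}
and the additivity formula then forces $\SF_a(G_0,G_1)-\SF_b(G_0,G_1)\in\Z$, whence $j(\SF_a(G_0,G_1))\in\R/\Z$ is independent of $a$.

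For the second statement I would exploit the split exact sequence \eqref{split} to reduce the computation to $K_1(C_{i_0})\simeq\R/\Z$. Evaluating the Kasparov cycle $(V,G)$ at $t=0$ or $t=1$ yields a cycle over $\C$, representing a class in $K_1(\C)=0$; hence $[(V,G)]\in K_1(Z_{i_0})$ lives entirely in the mapping-cone summand $K_1(C_{i_0})$. The six-term sequence for $0\to SB\to C_{i_0}\to\C\to 0$ identifies $K_1(C_{i_0})\simeq K_0(B)/i_{0,*}K_0(\C)=\R/\Z$, the isomorphism being induced by the $N$-trace modulo $\Z$. Choosing $a\in\R$ outside the (discrete) spectra of $G_0$ and $G_1$, the spectral projections $P_i=\chi_a(G_i)$ are affiliated to $N$ and satisfy $P_0-P_1\in\cK_N$, because $G_0-G_1\in N$ is bounded and both resolvents lie in $\cK_N$. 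The pair $(P_0,P_1)$ then represents $[(V,G)]$ on the mapping-cone summand via the boundary map, and its relative index $\ind(P_0,P_1)=\SF_a(G_0,G_1)$ yields the desired equality modulo $\Z$, the integer ambiguity in the choice of $a$ being precisely what the first statement resolves.

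The main obstacle is making this last identification rigorous, i.e.\ matching the abstract Kasparov product class in $K_1(Z_{i_0})$ with the $N$-index of the pair of spectral projections. I would handle it by passing to the bounded transform $F_t=G_t(1+G_t^2)^{-1/2}$, a norm-continuous path of selfadjoint operators with $F_t^2-1\in\cK_N$ at every $t$, and then invoke the standard identification in type ${\rm II}_\infty$ of the $KK^1$-class of such a cycle over a mapping-cone algebra with the associated type ${\rm II}$ spectral flow, in the spirit of \cite{CP1}.
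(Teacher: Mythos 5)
Your first argument (independence of $a$) is essentially the paper's: both observe that $G_0$ and $G_1$ are, up to unitary conjugation inside $N$, classical elliptic operators tensored with $1_B$, so their spectra are discrete with integer-trace spectral projections, and the additivity identity for $\SF_a-\SF_b$ then gives integrality.

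For the second statement you take a genuinely different route. The paper's proof is hands-on: it replaces $G$ by $\varphi(G)$ for a smooth chopping function equal to $\chi_a$ on $\Sp(G_0)\cup\Sp(G_1)$, shows that when $\SF_a=0$ the cycle can be homotoped to a degenerate one $(V,2P-1)$ via a norm-continuous path of projections joining $\chi_a(G_0)$ to $\chi_a(G_1)$, and then handles a general $\lambda=\SF_a(G_0,G_1)$ by adding an explicit Bott-type cycle $(V_\lambda,F_\lambda)$ over $C_0((0,1))\otimes B$ of class $-\lambda$ to reduce to the degenerate case. Your approach instead passes through the six-term exact sequence of $0\to SB\to C_{i_0}\to\C\to 0$, identifies $K_1(C_{i_0})\simeq K_0(B)/\Z\simeq\R/\Z$ via the boundary map (which is correct: $\partial:K_0(\C)\to K_1(SB)=K_0(B)$ sends $1\mapsto 1_B$), and then asserts that the pair of spectral projections $(\chi_a(G_0),\chi_a(G_1))$ represents the image of $[(V,G)]$ under this isomorphism, deferring the justification to the bounded transform and the type ${\rm II}$ spectral flow machinery of \cite{CP1}. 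This is a coherent and arguably cleaner conceptual picture, but it outsources precisely the identification that the paper proves directly; the paper's argument via degeneracy and the explicit Bott cycle is self-contained and does not need the external spectral-flow/$KK$ correspondence. You correctly flagged this as the main remaining gap, so the sketch is honest; to match the paper's level of completeness you would either need to work out the boundary-map computation explicitly or reproduce an argument like the paper's perturbation-to-degenerate trick.
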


\begin{proof}
Since $G_0=D_{E^+}\otimes 1_B$ and $G_1=w_v(1_{\C^n}\otimes D\otimes 1_B)w_v^{-1}$, the spectra of $G_0$ and $G_1$ are discrete and the trace of all the spectral projections is integer. In other words, ${\rm Tr}_N((\chi_a-\chi_b)(G_0))\in \Z $ and ${\rm Tr}_N((\chi_a-\chi_b)(G_1))\in \Z$, and therefore $j(\SF_a(G_0,G_1))\in \R/\Z$ is independent of $a\in \R$. 

To see that the class $[(V,G)]$ in $\R/\Z$ is equal to $j(\SF_a(G_0,G_1))$, note that:\begin{enumerate}
\item there is a smooth function $\varphi:\R\to [-1+1]$ such that $\varphi=-1$ near $-\infty$, $\varphi=+1$ near $+\infty$, and $\varphi=\chi_a$ in $\Sp(G_0)\cup \Sp(G_1)$. The class of the unbounded Kasparov module $(V,G)$ is that of $(V,\varphi(G))$. 
\item \label{degener}If $\SF_a (G_0,G_1)=0$, one can find a path $P=(P_t)$ of projections in $\mathcal{L}(V)$ such that $t\mapsto P_t-P_0$ is a norm continuous path with values in $\cK((L^2(M;S\otimes E^+)\otimes B)$, and such that $P_0=\chi_a(G_0)$ and $P_1=\chi_a(G_1)$: the class of $(V,\varphi(G))$ coincides with the class of  $(V,2P-1)$ which is degenerate.
\item For $\lambda\in\R_+^*$, let $E_\lambda$ be a finitely generated projective $B$-module with trace $\lambda$ and denote $V_\lambda=C_0(]0,1[,E_\lambda)$. Let $F_\lambda \in \mathcal{L}(V_\lambda)$ be given by $(F_\lambda\xi)(t)=(2t-1)\xi(t)$. Then $(V_\lambda,F_\lambda)$ is the product of the (opposite of the) Bott element in $K_1(C_0(]0,1[)$ by $E_\lambda $,  and its class in $K_1(C_0(]0,1[)\otimes B)=\R$ is therefore equal to $-\lambda$.
\item If $\SF_a (G_0,G_1)=\lambda>0$, then the class of $(V,\varphi(G))\oplus (V_\lambda,F_\lambda)$ is $0$ by (\ref{degener}), hence the class of $(V,\varphi(G))$ is $j(\lambda)$. This is also true if $\SF_a (G_0,G_1)<0$ (for instance by replacing $G$ by $-G$). \qedhere
\end{enumerate}
\end{proof}

\subsection{The pairing as a rho invariant}
We will now explain how our approach can be used to establish the formula of Atiyah--Patodi--Singer in \cite[Sec.5]{APS3}.

We first recall the facts about the $\eta$ invariant that will be used here.  A very nice survey can be found in the first chapter and in appendix D of \cite{Bo}.
\begin{enumerate}
\item If $P$ is an elliptic self-adjoint pseudodifferential operator of positive order, the \emph{eta function} $\eta(P,.)={\rm Tr}(P|P|^{-1-s})$ has a meromorphic continuation which is regular at $0$ (cf. \cite[Thm. 4.5]{APS3}, \cite{Gi2}). Put $\xi(P)=\frac{1}{2}\Big(\eta (P)+\dim \ker P\Big)$
\item If $P$ is differential of Dirac type, then $\displaystyle\eta (P)=\frac 1{\sqrt \pi}\int _0^{+\infty} {\rm Tr}(Pe^{-tP^2})\frac {dt}{\sqrt t}$ (cf \cite[Thm 2.6]{BF}).
\item The above remain true for $P$ with coefficients in a finite von Neumann algebra $B$ (\cite{CG, Ram, CP1, LP, An}).
\item For $P$ with coefficients in $B$, put $\displaystyle\xi_\varepsilon (P)=\frac 1{2\sqrt \pi}\int _\varepsilon^{+\infty} {\rm Tr}(Pe^{-tP^2})\frac {dt}{\sqrt t}+\frac 12\dim{}_B  \ker P.$ If $P_1,P_0$ are bounded perturbations of each other, then for any $\ep>0$,  and any smooth path $P_t$ joining $P_0$ with $P_1$ (\emph{i.e.} a path of the form $t\mapsto P_0+Q_t$ where $t\mapsto Q_t$ is smooth) we have (\cite[Cor. 8.10]{CP1})
$$\SF _0(P_0,P_1)=\xi_\varepsilon (P_0)-\xi_\varepsilon (P_1)+\sqrt {\frac{\varepsilon}{\pi}} \int_0^1{\rm Tr}_N(\dot  P_t e^{-\varepsilon P_t^2})\, dt.
$$
(where $N$ is the ${\rm II}_\infty$ factor associated with $B$).
\item \label{locality} If $V\to M$ is a von Neumann bundle and $P_t$ is a smooth family of Dirac type operators then $\displaystyle\sqrt {\frac{\varepsilon}{\pi}} \int_0^1{\rm Tr}_N(\dot  P_t e^{-\varepsilon P_t^2}) \,dt$ converges, when $\varepsilon \to 0$, to a local term, $$\displaystyle\int_M \hat A(M)\ch S/\hat S \cdot\CS(\nabla_0, \nabla_1)$$   see \cite[Sec. 4]{APS2}, \cite[1.5.1]{Bo}.
\end{enumerate}

Let $D$ be an elliptic differential operator. Recall that the rho invariant of $\alpha$ is $\rho(\alpha,D)=\xi(D^\alpha)-n\xi(D)$.

\begin{prop} 
{\rm \cite[Sec.5]{APS3}:}  If $D$ is a Dirac type operator, then
$[\a]\otimes_{C(M)} [D]$ is the class in modulo $\Z$ of $\rho(\alpha,D)$.
\end{prop}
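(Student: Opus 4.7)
\medskip\noindent\textbf{Proof plan.}
By the preceding proposition, $[\alpha]\otimes_{C(M)}[D]=j(\SF_0(G_0,G_1))\in \R/\Z$, where $G_0=D^\alpha\otimes 1_B$ and $G_1=w_v(D^n\otimes 1_B)w_v^{-1}$ are affiliated with the ${\rm II}_\infty$-factor $N=B\,\overline\otimes\, \mathcal L(H)$. Since $D$ is of Dirac type, both $G_0$ and $G_1$ arise from Dirac-type connections on the Hilbert $B$-bundle $V=S\otimes E^+\otimes B$: $\nabla^0$ is the natural metric Clifford connection combining the flat connection on $E^+$ with the trivial connection on $B$, while $\nabla^1$ is the pullback via $w_v$ of the analogous connection on $S\otimes E^-\otimes B$. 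Both $\nabla^0$ and $\nabla^1$ are flat on the twisting factor $E^+\otimes B$ and differ by the gauge transformation $w_v$.

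Now apply the Carey--Phillips variational formula (fact 4) to the smooth Dirac-type path $P_t$ obtained from the linear interpolation $\nabla^t=(1-t)\nabla^0+t\nabla^1$: for every $\varepsilon>0$,
\begin{equation*}
\SF_0(G_0,G_1)=\xi_\varepsilon(G_0)-\xi_\varepsilon(G_1)+\sqrt{\tfrac{\varepsilon}{\pi}}\int_0^1\Tr_N(\dot P_t e^{-\varepsilon P_t^2})\,dt.
\end{equation*}
Letting $\varepsilon\to 0$: since the trace of $B$ is normalized we have $\xi_\varepsilon(G_0)\to \xi(D^\alpha)$ and, by trace cyclicity, $\xi_\varepsilon(G_1)\to \xi(D^n)=n\xi(D)$; by fact 5, the integral term converges to the local Chern--Simons contribution $I=\int_M \hat A(M)\,\ch(S/\hat S)\,\CS_\tau(\nabla^0,\nabla^1)$. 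Hence, as an equality in $\R$,
\begin{equation*}
\SF_0(G_0,G_1)=\xi(D^\alpha)-n\xi(D)+I=\rho(\alpha,D)+I.
\end{equation*}

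To conclude it suffices to show $I\in\Z$, and this is the crux of the argument. Since $\nabla^0$ and $\nabla^1$ are both flat, $\CS_\tau(\nabla^0,\nabla^1)$ is a closed odd form and, by Proposition \ref{Ch=CS}, represents the odd Chern character $\ch_\tau(w_v)\in H^{odd}(M;\C)$; consequently $I$ is the natural Chern-character pairing of $[D]$ with the class $[w_v]\in K_1(C(M)\otimes B)$. By Proposition \ref{prop.RmodZ}(b) the class $[\alpha]$ does not depend on the choice of auxiliary data $(B,\mathcal E,v,\phi)$, so we may equivalently compute the pairing using the APS data with $B'=M_k(\C)$ and $\theta\colon E^-\otimes M_k\to E^+\otimes M_k$ coming from the torsion of $E_\alpha$ in $K$-theory; the resulting $[w_\theta]$ lies in $K_1(C(M)\otimes M_k)\cong K_1(C(M))$ and pairs integrally with $[D]$, while Theorem \ref{element.RmodZ} identifies the discrepancy between the two CS corrections with an integer. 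The main technical obstacle is precisely this last bookkeeping step: organizing the normalization factors from the two different traces so that $I$ lands in $\Z$ rather than merely in a rational multiple of it; granting this, $[\alpha]\otimes[D]=\rho(\alpha,D)\pmod{\Z}$ follows at once.
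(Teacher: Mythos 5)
Your reduction to showing that the Chern--Simons defect
\begin{equation*}
I=\lim_{\varepsilon\to 0}\sqrt{\tfrac{\varepsilon}{\pi}}\int_0^1\Tr_N(\dot P_t e^{-\varepsilon P_t^2})\,dt
=\SF_0(G_0,G_1)-\xi(G_0)+\xi(G_1)
\end{equation*}
vanishes modulo $\Z$ is the right decomposition, and the first part of the argument (Carey--Phillips variational formula, locality of the small--time limit, $\xi(G_0)=\xi(D^\alpha)$, $\xi(G_1)=n\xi(D)$) agrees with the paper. But the step you yourself flag as ``the main technical obstacle'' is exactly where the argument breaks down, and the attempted repair is not sound. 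The element $[w_v]$ lives in $K_1(C(M)\otimes B)\cong K^1(M)\otimes\R$, and the index pairing of a real-coefficient $K$-class with $[D]$ is a priori an arbitrary real number, so there is nothing forcing $I\in\Z$ without further work. The attempted reduction to the APS data $(M_k(\C),\theta)$ does not close the gap: there is in general no \emph{flat} isomorphism $\theta$ over a matrix algebra, so there is no finite-dimensional analogue of $w_v$ fitting the hypotheses; moreover, even if $w_v$ and $w_\theta$ are compared, their difference in $K_1(C(M);\R)$ is precisely the Chern--Simons correction term in Theorem~\ref{element.RmodZ}, which is an $\R/\Z$-statement about $[\alpha]$, not a statement that the \emph{real-valued} discrepancy of CS forms pairs integrally with $[D]$ --- using it that way would be circular.

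The paper closes this gap with a different, sharper mechanism: it introduces a third flat structure on $E^+\otimes S\otimes B$ coming from the flat bundle $\cE$ (via the identifications $E^+\otimes B\simeq E^+\otimes\cE\simeq_v E^-\otimes\cE\simeq E^-\otimes B$), yielding an intermediate Dirac-type operator $\tilde G$ between $G_0$ and $G_1$. Writing $\theta(P,P'):=\SF_0(P,P')-\xi(P)+\xi(P')$, which is additive and local, one computes $\theta(G_1,\tilde G)=n\,\theta(P,P')$ by unitary conjugation (both are $w_v(\,\cdot\,)w_v^{-1}$ of $n$ copies of operators on $S\otimes B$ and $S\otimes\cE$ respectively), while locality and flatness of $E^+$ give $\theta(G_0,\tilde G)=n\,\theta(P,P')$ as well; hence $\theta(G_0,G_1)=0$ exactly (not merely modulo $\Z$). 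This identity, rather than an integrality argument, is what makes the proof go through. A small secondary remark: the identity $\xi_\varepsilon(G_1)\to n\xi(D)$ follows because $G_1$ is unitarily conjugate to $1_{\C^n}\otimes D\otimes 1_B$ (same spectrum), not from trace cyclicity per se.
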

\begin{proof} If $P$ and $P'$ are two Dirac type operators acting on a von Neumann Clifford bundle, we put $\theta(P,P')=\SF _0(P,P')-\xi  (P)+\xi  (P')=\lim_{\varepsilon \to 0}\displaystyle\sqrt {\frac{\varepsilon}{\pi}} \int_0^1{\rm Tr}(\dot  P_t e^{-\varepsilon P_t^2}) \,dt$ for any smooth path $P_t$ joining $P$ with $P'$.

On the bundle $E^+ \otimes B\simeq_{1_{E^+}\otimes \phi} E^+\otimes \cE\simeq _v E^+\otimes \cE\simeq_{1_{E^-}\otimes \phi^{-1}}E^- \otimes B$ over $M$, we have three flat bundle structures (since the two middle ones coincide). We thus get three Dirac type operators as twisted tensor products of $D$ with this flat bundle $G_0=D^\alpha\otimes 1_B,\ \tilde G$ and $G_1=w_v(1_{\C^n}\otimes D\otimes 1_B)w_v^{-1}$ acting on $E^+\otimes S\otimes B$.

Consider the Dirac operators $P=D\otimes 1_B$ and $P'=(1_{S}\otimes \phi)\tilde D(1_{S}\otimes \phi^{-1})$ acting on $S\otimes B$.

We have $G_1=w_v(1_{\C^n}\otimes P)w_v^{-1}$ and $\tilde G=w_v(1_{\C^n}\otimes P')w_v^{-1}$, therefore $\theta(G_1,\tilde G)=n\theta(P,P')$.

By the locality property (\ref{locality}) of $\theta$, and since $G_0$ is the twisted tensor product of $P$ with the flat bundle $E^+$ and $\tilde G$ is the twisted tensor product of $P'$ with the flat bundle $E^+$, we find $\theta(G_0,\tilde G)=n\theta(P,P')$. And finally $\theta(G_0,G_1)=0$.
\end{proof}


\providecommand{\bysame}{\leavevmode\hbox to3em{\hrulefill}\thinspace}
\providecommand{\MR}{\relax\ifhmode\unskip\space\fi MR }
\providecommand{\MRhref}[2]{%
  \url{http://www.ams.org/mathscinet-getitem?mr=#1}{#2}
}
\providecommand{\href}[2]{#2}

\end{document}